\DeclareMathOperator{\Ad}{Ad}
\DeclareMathOperator{\ad}{ad}
\DeclareMathOperator{\Exp}{exp}
\DeclareMathOperator{\Id}{Id}
\DeclareMathOperator{\Ric}{Ric}
\DeclareMathOperator{\Span}{span}
\DeclareMathOperator{\rnk}{rk}
\newcommand{\fr}{\mathfrak}
\newcommand{\al}{\alpha}
\newcommand{\be}{\beta}
\newcommand{\bb}{\mathbb}
\newcommand{\thickline}{\noalign{\hrule height 1pt}}
\newtheorem{lemma} {Lemma} [section]
\newtheorem{theorem}[lemma]{Theorem} 
\newtheorem{remark}[lemma] {Remark} 
\newtheorem{prop} [lemma]{Proposition}
\begin{document}
\today 

\title
{Homogeneous Einstein metrics  on  $G_2/T$}

\author{Andreas Arvanitoyeorgos, Ioannis Chrysikos, and Yusuke Sakane}
\address{University of Patras, Department of Mathematics, GR-26500 Rion, Greece}
\email{arvanito@math.upatras.gr}
\email{xrysikos@master.math.upatras.gr}
 \address{Osaka University, Department of Pure and Applied Mathematics, Graduate School of Information Science and Technology,
Osaka 560-043, Japan}
 \email{sakane@math.sci.osaka-u.ac.jp}
\medskip
\noindent
\thanks{The third author was supported by Grant-in-Aid
for Scientific Research (C) 21540080}

\begin{abstract}
   We construct the Einstein equation for an  invariant Riemannian metric on the exceptional full flag manifold $M=G_2/T$. By computing a Gr\"obner basis for a system of polynomials of multi-variables we prove  that this manifold admits exactly two non-K\"ahler invariant Einstein metrics.  Thus $G_2/T$ turns out to be   the first  known example of an exceptional full flag manifold which admits at least one  non-K\"ahler and not normal homogeneous Einstein metric.

 \medskip
\noindent 2000 {\it Mathematics Subject Classification.} Primary 53C25; Secondary 53C30.

\medskip
\noindent   Keywords:   homogeneous  Einstein metric,   full flag manifold, exceptional Lie group $G_2$.

\end{abstract}
 
\maketitle

  
  \section*{Introduction}
\markboth{Andreas Arvanitoyeorgos, Ioannis Chrysikos and Yusuke Sakane}{Homogeneous Einstein metrics  on  $G_2/T$ }

A  Riemannian manifold  $(M, g)$  is called Einstein if the metric $g$ 
has constant  Ricci curvature, that is  $\Ric_{g}=\lambda g$ for some
 $\lambda\in\mathbb{R}$, where   $\Ric_{g}$ is the Ricci tensor corresponding to $g$. 
   The question whether $M$ carries an Einstein  metric, and  if so, 
   how many, is a fundamental one in Riemannian geometry.   A number of 
   interesting results in geometry have been motivated and inspired by this hard problem.
     The Einstein equation is a non linear second order PDE, and a good
      understanding of its solutions in the general case seems far from 
      being attained.  It becomes more manageable in the  homogeneous setting. 
       Most known examples of compact simply connected Einstein manifolds
        are homogeneous.  In the homogeneous case the Einstein equation 
         reduces to a system of  algebraic equations for which we are 
         looking for positive solutions. For some cases such solutions 
         can been obtained explicity.  In the compact case, invariant 
         Einstein metrics are also characterized as the critical points 
         of the scalar curvature functional on the space of   invariant
          Riemannian metrics of fixed volume.  We refer to \cite{NRS}  
          and the references therein for more details 
          in compact homogeneous Einstein manifolds.
 
 Let $K$ be a  compact, connected and semisimple Lie group.  	A full flag 
 manifold is a compact homogeneous spaces of the form $K/T$ where $T$  
 is a maximal torus in $K$.  It is well known (\cite[p. ~504]{B}) that such 
 a space admits   $|W(K)|/2$ invariant complex structures (here $W(K)$ is 
 the Weyl group of $K$), which are all equivalent  under an 
 automorphism of $K$ (\cite[p.~ 57]{N}). Also, $K/T$  admits a  unique 
 (up to isometry) $K$-invariant K\"ahler-Einstein metric.      
 
 Non-K\"ahler homogeneous Einstein metrics on full flag manifolds 
 corresponding to classical Lie groups have been  studied by several 
 authors (cf. \cite{Arv}, \cite{Sak}, \cite{Neg}).  Although various
 existence results  of homogeneous Einstein metrics on these spaces have been obtained, 
 the classification  of such metrics   is a 
 demanding task  which remains widely  open. 	In the present paper we study the 
 classification problem of homogeneous Einstein metrics on the full flag manifold $G_2/T$.  
  The  isotropy representation of $G_2/T$  decomposes into six inequivalent 
  irreducible submodules, because the root system of the Lie algebra $\fr{g}_2$
   of $G_2$, has six positive roots (see Section  \ref{section2}).  
    There are three (non isomorphic) 
   flag manifolds corresponding to the exceptional Lie group $G_2$, since there are exactly three different 
   ways to paint black the  simple roots in Dynkin diagram of $\fr{g}_2$, as shown in Figure 1
    (for the classification of generalized flag manifolds in terms of painted Dynkin diagrams see \cite{AA}, \cite{AP}.
 \medskip
 \[
    \begin{picture}(29,0) (102, 2.9)
\put(105,5){\circle*{4.4}}
\put(105, 11.3){\makebox(0,0){$\al_1$}}
\put(106.8, 6.3){\line(1,0){13.2}}
\put(106.8, 5.3){\line(1,0){16.1}}
\put(106.8, 4.3){\line(1,0){12.9}}
\put(117, 3.2){\scriptsize $>$}
\put(125,5){\circle{4}}
\put(125.5, 11.3){\makebox(0,0){$\al_2$}}
\put(114, -9){\makebox(0,0){{ \ $G_{2}(\al_2)$}}}
\end{picture}  \qquad \quad\ \ \ 
 \begin{picture}(29,0) (102, 2.9)
\put(105,5){\circle{4.4}}
\put(105, 11.3){\makebox(0,0){$\al_1$}}
\put(106.8, 6.3){\line(1,0){13.2}}
\put(106.8, 5.3){\line(1,0){16.1}}
\put(106.8, 4.3){\line(1,0){12.9}}
\put(117, 3.2){\scriptsize $>$}
\put(125,5){\circle*{4}}
\put(125.5, 11.3){\makebox(0,0){$\al_2$}}
\put(114, -9){\makebox(0,0){{ \ $G_{2}(\al_1)$}}}
\end{picture}  \qquad \quad \ \ \ 
 \begin{picture}(29,0) (102, 2.9)
\put(105,5){\circle*{4.4}}
\put(105, 11.3){\makebox(0,0){$\al_1$}}
\put(106.8, 6.3){\line(1,0){13.2}}
\put(106.8, 5.3){\line(1,0){16.1}}
\put(106.8, 4.3){\line(1,0){12.9}}
\put(117, 3.2){\scriptsize $>$}
\put(125,5){\circle*{4}}
\put(125.5, 11.3){\makebox(0,0){$\al_2$}}
\put(114, -9){\makebox(0,0){ { \ $G_2/T$}}}
\end{picture}
 \]
 
 \bigskip
 \begin{center}
 {\bf Figure 1.} {\small  The painted Dynkin diagrams corresponding to  $G_2$}
 \end{center} 
 
 \noindent 
 If we paint black  one simple root in the Dynkin diagram of $G_2$, 
 then we obtain a flag manifold of the form $G_2/U(2)$  with two or three 
 isotropy summands,  depending on the height of this simple root.  Recall 
 that for the root system of $\fr{g}_2$, we can choose a set of simple roots 
 by $\Pi_{M}=\{\al_1, \al_2\}$ with  $(\al_1, \al_1)=3(\al_2, \al_2)$ and then, 
 the heighest root has the form $\widetilde{\al}=2\al_1+3\al_2$ (see  Section 5). 
  Thus, the flag manifold $G_{2}(\al_2)$  in Figure 1 has two isotropy summands, 
  and  $U(2)$ is represented by the 
  short root of $\fr{g}_2$.\footnote{For the notation $G_{2}(\al_2)$ 
  and $G_{2}(\al_1)$ see the article \cite{AA}.}  
  In \cite{Bom} this space is denoted by $G_2/U(2)_3$ (the subscript is the Dynkin index
  of the $SU(2)$ factor in the denominator subgroup).
  For this space, all 
  $G_2$-invariant  Einstein metrics have been obtained explicity in 
  \cite{Sakane}, \cite{Chry1}.  
  The second flag manifold $G_2(\al_1)$ in Figure 1, has three isotropy 
  summands and the isotropy group $U(2)$ is represented by the long root 
  of $\fr{g}_2$.  In \cite{Bom} this is denoted by $G_2/U(2)_1$.
  For for that space, the $G_2$-invariant  Einstein metrics were 
  initially studied in  \cite{Kim}, and later in \cite{Arv}.   
 
 The full flag manifold $G_{2}/T$, where $T=U(1)\times U(1)$ is a maximal
  torus in $G_2$, is obtained by painting black both simple roots in 
  the Dynkin diagram of $G_2$.   According to \cite{Wa1} a full flag 
  manifold $K/T$   is a normal homogeneous Einstein manifold if and 
  only if all roots of $K$  have the same length, and in this case the
   normal   metric of $K/T$ is never K\"ahler. Therefore, if $K$ is 
   an exceptional Lie group  then $K/T$ is a normal homogeneous Einstein 
   manifold if and only if $K\in\{E_6, E_7, E_8\}$, hence $G_2/T$ is 
   not normal.   Our main result is the following:
      
      \medskip
    { \sc{Theorem A.}}
     {\it   The full flag manifold $G_2/T$ admits exactly three $G_2$-invariant
      Einstein metrics (up to isometry). There is a unique K\"ahler-Einstein 
      metric given  (up to scalar) by  $g=(3, 1, 4, 5, 6, 9)$  and the other 
      two are not K\"ahler.  The approximate values of these invariant 
      metrics are given in  Theorem \em{\ref{Einstein}}.}  
     
     \medskip
     As a consequence of Theorem A,  $G_2/T$ 
   is the first  known example of an exceptional full flag
    manifold which admits at least one  non-K\"ahler and not normal 
    homogeneous Einstein metric.   Note that  the isotropy representation 
    of the  full flag manifolds corresponding to the other exceptional 
    Lie groups $F_4, E_6, E_7$, and $E_8$ decomposes into  $24, 36, 63$, 
    and $120$  isotropy summands respectively (see Section  \ref{section1}, Table 1), 
    so searching for new non K\"ahler and not normal  homogeneous 
    Einstein metrics by using traditional techniques seems to be a difficult task.
  Finaly, note that the present work on $G_2/T$ is the 
  first attempt towards the classification of homogeneous
   Einstein metrics on generalized flag manifolds with six 
   isotropy summands.  

The paper is organised as follows:  
   In  Section  \ref{section1}   we recall the  Lie theoretic description 
   of a full flag manifold  $K/T$ of a  
   compact and connected semisimple Lie group $K$, and we study its  
   isotropy representation.  Next,  following the article 
   \cite{Sak} we describe the structure constants of $K/T$ 
   relative to the associated  isotropy decomposition, and 
   we give the expression of the Ricci tensor of a $K$-invariant 
   metric on $K/T$.  In Section \ref{section2} we consider the exceptional 
   full flag manifold $G_2/T$ and we give its Lie theoretic 
   description. Then we construct the Einstein 
   equation for a $G_2$-invariant Riemannian metric.  In the last section,   we 
   give the corresponding polynomial 
   system, and by computing Gr\"obner basis 
   for this system,  we  prove Theorem A and   obtain the full 
   classification of homogeneous Einstein metrics on $G_2/T$.

 	\section{Full flag manifolds}\label{section1}
 	From now on we consider a full flag manifold $K/T$ where
 	 $T$ is a maximal torus of a compact semisimple Lie group $K$. 
 	   We will give a characterization of $K/T$ in terms of root 
 	  system theory, and we will describe some topics of the associated K\"ahler   geometry.
 	  Then, we study the isotropy representation of $K/T$ and we give the expression of the Ricci tensor 
 	  for a $K$-invariant metric on $K/T$.

\subsection{A Lie theoretic description of $K/T$}\label{section1.1}
 Assume that $\dim_{\bb{R}}T=\rnk G=\ell$.  We denote by $\fr{k}$, $\fr{t}$  the Lie algebras of $K$ and 
$T$ respectively, and by  $\fr{k}_{\bb{C}}=\fr{k}\oplus i\fr{k}$,  
 $\fr{t}_{\bb{C}}=\fr{t}\oplus i\fr{t}$, the corresponding  complexifications.  Let $\fr{t}^{*}$ and $\fr{t}_{\bb{C}}^{*}$ be the dual spaces of $\fr{t}$  and $\fr{t}_{\bb{C}}$, respectively. 
The subalgebra  $\fr{t}_{\bb{C}}$ is a Cartan subalgebra of the complex 
semi-simple Lie algebra $\fr{k}_{\bb{C}}$,  and thus we obtain the root space decomposition 
\[ 
\fr{k}_{\bb{C}}=\fr{t}_{\bb{C}}\oplus\sum_{\al\in R}\fr{k}_{\bb{C}}^{\al},
\]
where $R$ is the root system of $\fr{k}_{\bb{C}}$ relative to $\fr{t}_{\bb{C}}$  and 
\[
\fr{k}_{\bb{C}}^{\al}=\{X\in\fr{k}_{\bb{C}} : \ad(H)X=\al(H)X,   \ \mbox{for all} \   H\in\fr{t}_{\bb{C}}\}
\]
 is the root space associated to the root $\al$.    Recall that by $\bb{C}$-linearity, a root $\al\in R$ is completely determined by its restriction to either $\fr{t}$ or $i\fr{t}$.
 Since the Killing form $B$ of $\fr{k}_{\bb{C}}$ is non-degenerate, for any $\lambda\in \fr{t}_{\bb{C}}^{*}$  we define $H_{\lambda}\in i\fr{t}$  
 by the equation  $B(H_{\lambda}, H)=\lambda(H)$ for all $H\in\fr{t}_{\bb{C}}$.  Let   $i\fr{t}^{*}$ denotes the  real linear subspace of $\fr{t}_{\bb{C}}^{*}$  which  consists of  all $\lambda\in\fr{t}_{\bb{C}}^{*}$ such that the restriction $\lambda|_{\fr{t}}$ has values in $i\bb{R}$; the later condition is equivalent to saying that $\lambda|_{i\fr{t}}$ is real valued.  Note that the restriction map $\lambda\mapsto \lambda|_{i\fr{t}}$ defines an isomorphism from $i\fr{t}^{*}$ onto the real linear dual space $(i\fr{t}^{*})$, which allows us to identify these  spaces.  Then, it is well known that  $R$ spans $i\fr{t}^{*}$ and that $R$ is a finite subset of $i\fr{t}^{*}\backslash\{0\}$.  Thus, if  $\al\in R$ then  $\al\in i\fr{t}^{*}$.    
 
  Let $( \ , \ )$ denote the bilinear form on $\fr{t}_{\bb{C}}^{*}$ induced form the Killing form $B$, that is  $(\lambda, \mu)=B(H_{\lambda}, H_{\mu})$, for any $\lambda, \mu\in\fr{t}_{\bb{C}}^{*}$.  Then, since $B$ is negative definite on $\fr{t}$ and positive definite on $i\fr{t}$, the restriction of $( \ , \ )$ on $i\fr{t}^{*}$ is a positive definite inner product. The weight lattice of $\fr{k}_{\bb{C}}$ with respect to $\fr{t}_{\bb{C}}$ is given by 
 \[
 \Lambda=\{\lambda\in i\fr{t}^{*} : \displaystyle\frac{2(\lambda, \al)}{(\al, \al)}\in\bb{Z} \ \  \text{for all} \ \ \al\in R\}.
 \]
    Let $\Pi$ = $\{\alpha^{}_1, \dots, \alpha^{}_\ell\}$
be a simple root system of $R$, and let $R^{+}$ be the set of all positive roots with respect to $\Pi$.   Consider the fundamental weights corresponding to $\Pi$, that is $\Lambda^{}_1, \ldots, \Lambda_{\ell}\in\Lambda$ such that
\begin{equation}\label{fund}
\frac{2(\Lambda^{}_i, \alpha^{}_j)}{(\alpha^{}_j, \alpha^{}_j)} =
\delta^{}_{ij} \qquad (1 \le i, j \le \ell).
\end{equation}
 Then $\{\Lambda_1, \ldots, \Lambda_{\ell}\}$ forms a  $\bb{Z}$-basis for the weight lattice $\Lambda$, and since $i\fr{t}\cong (i\fr{t})^{*}\cong i\fr{t}^{*}$, it is that $i\fr{t}=\sum_{i=1}^{\ell}\bb{R}\Lambda_{i}$. In the weight lattice $\Lambda$ there is a
 distinguished subset $\Lambda^{+}$ given by
 \[
 \Lambda^{+}=\{\lambda\in \Lambda : (\lambda, \al_{i})>0 \ \ \text{for any} \ \ i=1, \dots, \ell\}=\{\lambda\in \Lambda : (\lambda, \al)>0 \ \ \text{for any} \ \ \al\in R^{+}\}.
 \]
 One can see that $\Lambda^{+}$ is the intersection of $\Lambda$ with the  fundamental Weyl chamber corresponding to $\Pi$, given by
 \[
 C(\Pi)=\{\lambda\in i\fr{t}^{*} : (\lambda, \al_{i})>0 \ \ \mbox{for any} \ \ \al_{i}\in\Pi\}.
 \]
   Elements of $\Lambda^{+}$ are usualy called dominant weights relative to $R^{+}$, and any dominant weight can be expressed as a linear combination of the fundamental weights with non negative coefficients.  For example, set 
 \[
 \delta=\displaystyle\frac{1}{2}\sum_{\al\in R^{+}}\al\in i\fr{t}^{*}.
 \]
   Then,  $\delta=\sum_{i=1}^{\ell}\Lambda_{i}$ and thus $\delta\in\Lambda^{+}$ (cf. \cite[p. ~ 144]{Bum}, \cite[p. ~523]{Hel}). 
 
 Set
 \[
 \fr{n}=\sum_{\al\in R^{+}}\fr{k}^{\al}_{\bb{C}}, \qquad  \fr{b}=\fr{t}_{\bb{C}}\oplus\fr{n}.
 \]
 Since   $R^{+}$ is a closed subset of $R$ and  
 $[\fr{t}_{\bb{C}}, \fr{k}^{\al}_{\bb{C}}]\subseteq \fr{k}^{\al}_{\bb{C}}$, 
 it is clear that $\fr{n}$ and $\fr{b}$ are closed under the Lie bracket, 
 and thus they are complex Lie subalgebras of $\fr{k}_{\bb{C}}$. One 
 can easily show that $\fr{n}$ is a  nilpotent ideal of $\fr{k}_{\bb{C}}$. 
   Morever, since $\fr{t}_{\bb{C}}$ is abelian and normalizes $\fr{n}$, 
   we have that $[\fr{b}, \fr{b}]\subset\fr{n}$, and since $\fr{n}$ is 
   nilpotent and hence solvable, it follows that $\fr{b}$ is solvable. 
   In fact, $\fr{b}$ is a maximal solvable  Lie subalgebra of $\fr{k}_{\bb{C}}$, 
   i.e. a Borel subalgebra of $\fr{k}_{\bb{C}}$. 
 
  Let $K_{\bb{C}}$ denote the  complex simply connected semisimple Lie 
  group whose Lie algebra is $\fr{k}_{\bb{C}}$.  Then,  the 
  connected subgroup $B\subset K_{\mathbb C}$ with Lie algebra $\fr{b}$ 
  is  a Borel subgroup of $K_{\bb{C}}$.  Let $N$ be the connected Lie sugroup of $K_{\bb{C}}$ 
  corresponding to  $\fr{n}$, and 
    $A$   be the connected subgroup of $T$ corresponding to 
    the abelian Lie subalgebra $\fr{a}=i\fr{t}$. Then,  
     according to the Iwasawa decomposition 
    we have that any $g\in K_{\bb{C}}$ 
    can be expressed as  $g=ank$, where $a\in A$, $n\in N$ and $k\in K$. 
    In particular, the map $A\times N\times K\to K_{\bb{C}}$ is a 
    diffeomorphism. 
     At the Lie algebra level, this means that 
     $\fr{k}_{\bb{C}}=\fr{a}\oplus\fr{n}\oplus\fr{k}$. 
  One can easily show that $\fr{a}\oplus\fr{n}$ is a solvable
   subalgebra which is contained in the Borel subalgebra $\fr{b}$.
      Thus   we have $A\times N\subset B\subset K_{\bb{C}}$ and   we can rewrite 
     the Iwasawa decomposition as $K_{\bb{C}}=B\times K$.  This implies that 
       $K$ acts transitively on $
     K_{\bb{C}}/B$ with  isotropy subgroup  the connected 
     closed subgroup $T=K\cap B\subset K$. 
     Thus $K_{\mathbb C}/B$ = $K/T$ as $C^\infty_{}$-manifolds.

        Since $K_{\bb{C}}$ is a complex Lie group 
and $B$ a closed complex subgroup, the quotient 
$K_{\mathbb C}/B$ admits a $K$-invariant complex structure. 
 Furthermore (cf. \cite{B}),  
 the $K$-invariant complex structures on $K_{\bb{C}}/B=K/T$ are in  1-1 correspondence with 
 different choices of positive roots for $\fr{k}_{\bb{C}}$.  
 Since the Weyl group $W(R)$ of the root system of $\fr{k}_{\bb{C}}$ acts transitively on the sets of systems of positive roots, 
 all these complex structures are equivalent.  Moreover, the following holds:

 \begin{theorem}\label{ke}\textnormal{(\cite{B}, \cite{Tak})}.
 There is a 1-1 correspondence between $K$-invariant K\"ahler metrics on $K_{\bb{C}}/B$ and dominant weights in $\Lambda^{+}$.  In particular, the $K$-invariant K\"ahler metric on $K_{\bb{C}}/B$ corresponding to $2\delta$ is a K\"ahler-Einstein metric.
  \end{theorem}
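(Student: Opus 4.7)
The plan is to reduce the classification of $K$-invariant K\"ahler structures on $K_{\bb{C}}/B = K/T$ to a linear-algebraic problem on $i\fr{t}^{*}$, and then to identify the Einstein member via a first Chern class computation.

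First, I would parametrise the invariant real $(1,1)$-forms. Using the root space decomposition, the tangent space at the origin is identified with $\fr{m}=\sum_{\al\in R^{+}}\bigl((\fr{k}_{\bb{C}}^{\al}+\fr{k}_{\bb{C}}^{-\al})\cap\fr{k}\bigr)$, and $T$ acts on each $2$-dimensional real summand $(\fr{k}_{\bb{C}}^{\al}+\fr{k}_{\bb{C}}^{-\al})\cap\fr{k}$ irreducibly, with characters $\{\pm\al\}$ that are pairwise inequivalent as $\al$ ranges over $R^{+}$. Hence by Schur's lemma a $K$-invariant real $2$-form $\omega$ is determined by real coefficients $(x_{\al})_{\al\in R^{+}}$ with $\omega_{o}=\sum_{\al\in R^{+}}x_{\al}\,\omega_{\al}$, where $\omega_{\al}$ generates the line of $T$-invariant $2$-forms on that summand. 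With respect to the invariant complex structure declaring $\fr{k}_{\bb{C}}^{\al}$, $\al\in R^{+}$, to be of type $(1,0)$, the form $\omega$ is Hermitian positive iff $x_{\al}>0$ for all $\al\in R^{+}$.

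Second, I would impose $d\omega=0$. The invariant Maurer--Cartan formula together with $[\fr{k}_{\bb{C}}^{\al},\fr{k}_{\bb{C}}^{\be}]\subseteq \fr{k}_{\bb{C}}^{\al+\be}$ reduces closedness to the additivity condition $x_{\al+\be}=x_{\al}+x_{\be}$ whenever $\al,\be,\al+\be\in R^{+}$. Since every positive root is a non-negative integer combination of simple roots, any additive assignment extends uniquely to an $\bb{R}$-linear functional on $i\fr{t}^{*}$: there is a unique $\lambda\in i\fr{t}^{*}$ with $x_{\al}=(\lambda,\al)$ for every $\al\in R^{+}$. Positivity then translates into $\lambda\in C(\Pi)$, and imposing the integrality (Hodge) requirement that $\omega$ be a genuine K\"ahler class picks out exactly $\lambda\in\Lambda^{+}$, yielding the stated bijection.

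Third, I would identify the Einstein member. A Koszul/Borel--Hirzebruch computation shows that the Ricci form of any invariant K\"ahler metric $\omega_{\lambda}$ on $K/T$ is again an invariant $(1,1)$-form, whose associated element of $i\fr{t}^{*}$ is the fixed weight $2\delta=\sum_{\al\in R^{+}}\al$; equivalently, the Ricci form represents $2\pi c_{1}(K_{\bb{C}}/B)$, and the anticanonical line bundle of a flag manifold corresponds to $2\delta$. Consequently $\Ric(\omega_{\lambda})=c\,\omega_{\lambda}$ iff $2\delta=c\lambda$, so $\lambda$ must be a positive multiple of $\delta$, and the unique K\"ahler--Einstein representative corresponds to $2\delta$. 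The main obstacle will be this last step: carrying out the Chern class (or direct Koszul) identification $\Ric(\omega_{\lambda})=\omega_{2\delta}$ cleanly while keeping track of normalisations among the Killing form, the roots, and the invariant Hermitian structure. Once this identification is in hand, the remainder is pure linear algebra on the weight lattice.
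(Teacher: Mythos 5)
The paper does not prove this theorem; it is quoted from Borel--Hirzebruch \cite{B} and Takeuchi \cite{Tak}, so there is no internal proof to compare against. Your outline is the standard argument and is essentially correct: the Schur-lemma parametrisation $\omega_{o}=\sum_{\al\in R^{+}}x_{\al}\omega_{\al}$, the reduction of $d\omega=0$ to the additivity $x_{\al+\be}=x_{\al}+x_{\be}$ (one should also check, as is routine, that the components of $d\omega$ involving $\fr{t}$ or triples of roots with nonzero sum vanish automatically), the resulting identification $x_{\al}=(\lambda,\al)$ for a unique $\lambda$ determined by its values on the simple roots, positivity forcing $\lambda\in C(\Pi)$, and finally the Koszul/first-Chern-class identity $\Ric(\omega_{\lambda})=\omega_{2\delta}$ forcing $\lambda$ to be proportional to $\delta$. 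This is exactly how the result is established in the cited sources and in Alekseevsky--Perelomov \cite{AP}.

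One caveat worth flagging, which is really a looseness in the statement itself rather than in your argument: a $K$-invariant K\"ahler metric need not have integral K\"ahler class, so the closed positive invariant $(1,1)$-forms are parametrised by the entire open Weyl chamber $C(\Pi)$ (a cone), not by the discrete set $\Lambda^{+}$. Your insertion of a ``Hodge requirement'' is the right way to make the bijection with $\Lambda^{+}$ literally true, but you should state explicitly that this is an additional normalisation convention (integrality of $[\omega]/2\pi$), since otherwise the second and third steps of your proof sit uneasily together: the Einstein condition singles out the ray $\bb{R}_{+}\delta$, and $2\delta$ is the distinguished integral point on it, which is all the paper actually uses (cf.\ Theorem \ref{G2KE}, where $g_{2\delta}$ is computed only up to scale).
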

 
 \subsection{The isotropy representation of $K/T$}
 We will now examine the isotropy representation of a full flag manifold $K_{\bb{C}}/B=K/T$.   Cosnider the reductive decomposition $\fr{k}=\fr{t}\oplus\fr{m}$ of $\fr{k}$ with respect to the negative of the Killing form $Q=-B( \ , \ )$, that is 
 $\fr{m}=\fr{t}^{\perp}$ and $\Ad(T)\fr{m}\subset\fr{m}$. As usual, we identify  $\fr{m}=T_{o}(K/T)$ (where $o=eT$ is the identity coset of $K/T$), via the isomorphism
 \[
 \fr{m}\ni X \ \leftrightarrow X^{*}_{o}=\frac{d}{dt}\big\{\Ad(\Exp tX)o\big\}\Big|_{t=0}\in T_{o}(K/T) .
 \]
  Take a Weyl basis $\{H_{\al_{1}}, \ldots, H_{\al_{\ell}}\}\cup\{E_{\al}\in\fr{k}_{\bb{C}}^{\al} : \al\in R\}$ with $B(E_{\al}, E_{-\al})=-1$, $[E_{\al}, E_{-\al}]=-H_{\al}$ and  
    \begin{equation}\label{str}
 [E_{\al}, E_{\be}]=
 \left\{
\begin{array}{ll}
  N_{\al, \be}E_{\al+\be}  & \mbox{if} \ \ \al, \be,\al+\be\in R \\
  0 & \mbox{if} \ \ \al, \be\in R,  \al+\be\notin R.
\end{array} \right.
\end{equation}  
 The numbers $N_{\al, \be}\in\bb{R}$ are called the structure constants of $\fr{k}_{\bb{C}}$ with repsect to $\fr{t}_{\bb{C}}$  and they are such that $N_{\al, \be}=0$  if $\al, \be \in R$, $\al+\be\notin R$, and $N_{\al, \be}=-N_{\be, \al}$,  $ N_{\al, \be}=N_{-\al, -\be}\in \mathbb{R}$ if 
$\al, \be, \al+\be\in R$. Then, the real subalgebra  $\fr{k}$ is  given by 
\begin{equation}\label{realk}
\fr{k}=\sum_{j=1}^{\ell}\bb{R}iH_{\al_{j}}\oplus\sum_{\al\in R^{+}}(\mathbb{R}A_{\al}+\mathbb{R}B_{\al}) =\fr{t}\oplus \sum_{\al\in R^{+}}(\mathbb{R}A_{\al}+\mathbb{R}B_{\al}),
\end{equation}
  where  $A_{\al}=E_{\al}+E_{-\al}$ and  $B_{\al}=i(E_{\al}-E_{-\al})$, $(\al\in R^{+})$.   Note that $\fr{k}$, as a real form of $\fr{k}_{\bb{C}}$, is the fixed point set of the conjugation   $\tau : \fr{k}_{\bb{C}}\to\fr{k}_{\bb{C}}$, defined by $\tau(E_{\al})=E_{-\al}$.
  Since $\fr{t}=\Span_{\bb{R}}\{iH_{\al_{j}} : 1\leq j\leq \ell\}$, then the reductive decomposition $\fr{g}=\fr{t}\oplus\fr{m}$ implies that
  \begin{equation}\label{full}
  \fr{m}=T_{o}(K/T)=\sum_{\al\in R^{+}}(\mathbb{R}A_{\al}+\mathbb{R}B_{\al}).
  \end{equation}
  
  Set $\fr{m}_{\al}=\bb{R}A_{\al}+\bb{R}B_{\al}$ for any $\al\in R^{+}$.   
 The linear space $\fr{m}_{\al}$ is an irreducible $\Ad(T)$-module which does not depend on the choise of an ordering in $R$.  Furthermore, since the roots of $\fr{k}_{\bb{C}}$ with respect to $\fr{t}_{\bb{C}}$ are distinct, and the root spaces are one-dimensional, it  is obvious that $\fr{m}_{\al}\ncong\fr{m}_{\be}$ as $\Ad(T)$-representations, for any two roots $\al, \be\in R^{+}$.  Thus, by using (\ref{full}) we obtain the following:

  \begin{prop}\label{numberfull}
 Let  $M=K/T$ be a full flag manifold  of a compact simple Lie group $K$.  Then the isotropy representation of $M$ decomposes into a direct sum of 2-dimensional pairwise inequivalent irreducible $T$-submodules $\fr{m}_{\al}$, as follows:
 \begin{equation}\label{fullflag}
 \fr{m}=\sum_{\al\in R^{+}}\fr{m}_{\al}.
 \end{equation}
   The number of these submodules is equal to  the cardinality $|R^{+}|$.  
 \end{prop}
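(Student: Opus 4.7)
The plan is to verify each claim in turn, since the direct-sum decomposition $\fr{m}=\sum_{\al\in R^{+}}\fr{m}_{\al}$ is already recorded in equation (\ref{full}): what remains is to show (i) each $\fr{m}_{\al}$ is $\Ad(T)$-stable, (ii) each is real two-dimensional and irreducible, and (iii) for distinct $\al,\be\in R^{+}$ the modules $\fr{m}_{\al}$ and $\fr{m}_{\be}$ are inequivalent. The statement about the number of summands is then an immediate consequence.

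First I would compute the $\fr{t}$-action on $\fr{m}_{\al}$ using the basis $\{iH_{\al_{j}}:1\le j\le \ell\}$ of $\fr{t}$ supplied by (\ref{realk}). Since $[H,E_{\pm\al}]=\pm\al(H)E_{\pm\al}$ and $\al(H_{\al_{j}})=B(H_{\al},H_{\al_{j}})=(\al,\al_{j})\in\bb{R}$, a direct computation gives
\[
[iH_{\al_{j}},A_{\al}]=(\al,\al_{j})B_{\al},\qquad [iH_{\al_{j}},B_{\al}]=-(\al,\al_{j})A_{\al}.
\]
Thus $\ad(\fr{t})\fr{m}_{\al}\subset\fr{m}_{\al}$, and since $T$ is connected, $\Ad(T)\fr{m}_{\al}\subset\fr{m}_{\al}$. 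The matrix of $\ad(iH_{\al_{j}})\big|_{\fr{m}_{\al}}$ in the basis $\{A_{\al},B_{\al}\}$ is a multiple of the skew-symmetric rotation matrix. Because $\al\neq 0$ in $i\fr{t}^{*}$, there exists some $j$ with $(\al,\al_{j})\neq 0$, so this rotation is non-trivial, and no proper real subspace of $\fr{m}_{\al}$ can be invariant. This proves two-dimensionality (obvious from the definition) and irreducibility of $\fr{m}_{\al}$ as a real $T$-module.

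For pairwise inequivalence, I pass to complexifications: as a $\fr{t}_{\bb{C}}$-module one has $\fr{m}_{\al}\otimes_{\bb{R}}\bb{C}=\fr{k}_{\bb{C}}^{\al}\oplus\fr{k}_{\bb{C}}^{-\al}$, whose weight set is $\{\al,-\al\}$. For a torus, two real representations are equivalent if and only if their complexifications have identical multisets of weights. For $\al,\be\in R^{+}$ with $\al\neq\be$ one has $\{\pm\al\}\neq\{\pm\be\}$, because $\al=-\be$ is excluded by positivity. Hence $\fr{m}_{\al}\not\cong\fr{m}_{\be}$. The count $|R^{+}|$ follows from (\ref{full}), and the independence of $\fr{m}_{\al}$ from the ordering of $R$ is visible from $A_{-\al}=A_{\al}$, $B_{-\al}=-B_{\al}$, which give $\fr{m}_{\al}=\fr{m}_{-\al}$ as a subspace.

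The argument is largely bookkeeping once the bracket formulas are in place, and there is no serious obstacle; the only subtlety is the standard translation between real and complex $T$-module equivalences via weights, which is handled by the complexification step above.
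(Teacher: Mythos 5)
Your proof is correct and follows the same route the paper takes: the paper simply asserts in the paragraph preceding the proposition that each $\fr{m}_{\al}=\bb{R}A_{\al}+\bb{R}B_{\al}$ is an irreducible $\Ad(T)$-module independent of the ordering and that distinct (one-dimensional) root spaces force $\fr{m}_{\al}\ncong\fr{m}_{\be}$, then invokes (\ref{full}). You have merely supplied the standard verifications (the bracket computation showing the rotation action, and the weight comparison after complexification) that the paper leaves implicit.
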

  
  In Table 1, following \cite{Brb}, we give for any full flag manifold $K/T$ of a compact simple Lie group $K$  the number  of the corresponding isotropy summands.
  
 \begin{center}
 {\bf Table 1.} \ {\small The number of the isotropy summnads for a full flag manifold $K/T$}
 \end{center}
 \smallskip
{\footnotesize{ \begin{center}
 \begin{tabular}{llll}
 \hline 
 $\mbox{Simple  Lie group} \ G$ & Full flag manifold   $K/T$ & $|R|$ & $\fr{m}=\oplus_{i=1}^{s}\fr{m}_{i}$ \\
  \thickline
  $SU(\ell+1), \ \ell\geq 1$ & $SU(\ell+1)/T$ & $\ell(\ell+1)$ & $s=\ell(\ell+1)/2$ \\
  $SO(2\ell+1), \ \ell\geq 2$ & $SO(2\ell+1)/T$ & $2\ell^{2}$ & $s=\ell^{2}$ \\
  $Sp(\ell), \ \ell\geq 2$ & $Sp(\ell)/T$ & $2\ell^{2}$ & $s=\ell^{2}$ \\
  $SO(2\ell), \ \ell\geq 3$ & $SO(2\ell)/T$ & $2\ell(\ell-1)$ & $s=\ell(\ell-1)$ \\
  $G_2$ & $G_2/T$ & $12$ & $s=6$ \\
  $F_4$ & $F_4/T$ & $48$ & $s=24$ \\
  $E_6$ & $E_6/T$ & $72$ & $s=36$ \\
  $E_7$ & $E_7/T$ & $126$ & $s=63$ \\
  $E_8$ & $E_8/T$ & $240$ & $s=120$ \\
  \hline 
  \end{tabular} 
 \end{center}}}
  
  Note that for $\ell=1$  the full flag $SU(\ell+1)/T$   is 
  $SU(2)/U(1)\cong \bb{C}P^{1}$, which is an isotropy irreducible Hermitian symmetric space.

 \subsection{The Ricci tensor for a $K$-invariant metric on $K/T$}\label{section1.3}  
 Since $K/T$ is a reductive homogeneous space, there is a natural 1-1 correspondence between $K$-invariant symmetric covariant 2-tensors on $K/T$ and $\Ad(T)$-invariant symmetric bilinear forms on $\fr{m}$.  For example, in this correspondence a $K$-invariant Riemannian metric $g$ on $K/T$ corresponds to an $\Ad(T)$-invariant inner product $\langle \ , \ \rangle$ on $\fr{m}$. In particular, since $\fr{m}$ admits the decomposition (\ref{fullflag}) and the $\Ad(T)$-modules are mutually inequivalent, the space of $K$-invariant Riemannian metrics on $K/T$   is given by
 \begin{equation}\label{Inva}
\Big\{  g=\langle \ , \ \rangle=\sum_{\al\in R^{+}}x_{\al}\cdot Q|_{\fr{m}_{\al}} : x_{\al}\in \bb{R}^{+}\Big\}.
\end{equation} 
  Notice that the $K$-invariant K\"ahler-Einstein metric on $K_{\bb{C}}/B=K/T$ corresponding to $2 \delta=2 \sum_{i=1}^{\ell}\Lambda_{i}$ is given by
   \begin{equation}\label{kke}
  g_{2 \delta}=\sum_{\al\in R^{+}} 2(\Lambda_1+\cdots+\Lambda_{\ell}, \al)\cdot  Q|_{\fr{m}_{\al}}.
  \end{equation}
  
  Similarly, the Ricci tensor $\Ric_{g}$ of a $K$-invariant metric $g$ on $K/T$, as a $K$-invariant covariant 2-tensor, will be described by an $\Ad(T)$-invariant symmetric bilinear form on $\fr{m}$ given by
  \[
\Ric_{g} = \sum_{\al\in R^{+}} r_\al x_{\al}\cdot Q|_{\fr{m}_{\al}},
\]
where $r_{\al}$ $(\al\in R^{+})$  are the components of the Ricci 
tensor on each module $\fr{m}_{\al}$.  Since $\fr{m}_{\al}\ncong\fr{m}_{\be}$ for any $\al, \be,\in R^{+}$,  it is $\Ric_{g}(\fr{m}_{\al}, \fr{m}_{\be})=0$ (cf. \cite{Wa2}).

The components $r_{\al}$ admit a useful description in terms of the structure constants of $K/T$
associated to the isotropy decomposition (\ref{fullflag}).  For convenience, we give the general  definition of these quantities for a compact homogeneous space $K/L$ of a compact semi-simple Lie group $K$, following \cite{Wa2} and \cite{SP}.  Let $\fr{k}=\fr{l}\oplus\fr{m}$ be a reductive decomposition of $\fr{k}=T_{e}K$ with respect to a  bi-invariant metric $Q$  on   $\fr{k}$. Assume that the isotropy representation $\fr{m}$ of $K/L$ decomposes into $s$ pairwise inequivalent irreducible $\Ad(L)$-modules $\fr{m}_{i}$ as follows $\fr{m}=\fr{m}_1\oplus\cdots\oplus\fr{m}_{s}$.  Choose a $Q$-orthonormal basis   $\{e_{p}\}$  adapted to $\fr{m}=\oplus_{i=1}^{s}\fr{m}_{i}$, that is  $e_{p}\in \fr{m}_{i}$ for some $i$, 
   and $p<q$ if $i<j$ (with $e_{p}\in \fr{m}_{i}$ and $e_{q}\in\fr{m}_{j}$).  Here, $Q$ is a bi-invariant metric on the Lie algebra $\fr{k}=T_{e}K$.
      Let $A_{pq}^{r}= Q([e_{p}, e_{q}], e_{r})$, so 
   that $[e_{p}, e_{q}]_{\fr{m}}=\sum_{\gamma}A_{pq}^{r}e_{r}$, 
   and set   
   \begin{equation}\label{ijk}
   \displaystyle\genfrac{[}{]}{0pt}{}{k}{ij}=\sum(A_{pq}^{r})^{2}=\sum \big(Q([e_{p}, e_{q}], e_{r})\big)^{2},
   \end{equation}
    where the sum is taken over all 
   indices $p, q, r$ with $e_{p}\in \fr{m}_{i}, e_{q}\in\fr{m}_{j}$, and $e_{r}\in\fr{m}_{k}$.
   The triples $\displaystyle\genfrac{[}{]}{0pt}{}{k}{ij}$ are called the {\it structure constants} of $K/L$ with respect the decomposition  $\fr{m}=\oplus_{i=1}^{s}\fr{m}_{i}$ of $\fr{m}$.  According to \cite{Wa2}, the structure constants are independent of the $Q$-orthonormal bases $\{e_{p}\}$, $\{e_{q}\}$ and $\{e_{r}\}$ chosen for 
   $\fr{m}_{i}, \fr{m}_{j}$ and $\fr{m}_{k}$, respectively, but obviously they depend on the choise 
   of the decomposition of $\fr{m}$.  Also, $\displaystyle\genfrac{[}{]}{0pt}{}{k}{ij}$ is    nonnegative, i.e. $\displaystyle{k \brack {ij}}\geq 0$ with $\displaystyle{k \brack {ij}}=0$ if and only if $Q([\fr{m}_{i}, \fr{m}_{j}], \fr{m}_{k})=0$, and they are symmetric in all 
   three entries, that is  $\displaystyle\genfrac{[}{]}{0pt}{}{k}{ij}=\genfrac{[}{]}{0pt}{}{k}{ji}=\genfrac{[}{]}{0pt}{}{j}{ki}$.
  
   We now  return to $K/T$ and study its  structure constants with respect the  $Q$-orthogonal decomposition $\fr{m}=\sum_{\al\in R^{+}}\fr{m}_{\al}$, where $Q=-B( \ , \ )$, and $\fr{m}_{\al}=\bb{R}A_{\al}+\bb{R}B_{\al}$.  Note that we can rewrite the  above splitting of $\fr{m}$  as    $\fr{m}=\fr{m}_1\oplus\cdots\oplus\fr{m}_{s}$,  where $s=|R^{+}|$.   Since $B(E_{\al}, E_{\al})=-1$, one can verify that the  vectors $A_{\al}$ and  $B_{\al}$ are such that $B(A_{\al}, A_{\al})=B(B_{\al}, B_{\al})=-2$ and $B(A_{\al}, B_{\al})=0$.  Therefore, the set
   \begin{equation}\label{fullbass}
 \Big\{X_{\al}=\frac{A_{\al}}{\sqrt{2}}=\frac{E_{\al}+E_{-\al}}{\sqrt{2}},\qquad Y_{\al}=\frac{B_{\al}}{\sqrt{2}}=\frac{i(E_{\al}-E_{-\al})}{\sqrt{2}}: \al\in R^{+}\Big\},
 \end{equation}
 is a   $Q$-orthonormal  basis of $\fr{m}_{\al}$. If we denote for simplicity such a basis by $\{e_{\al}\}=\{X_{\al}, Y_{\al}\}$, then  
 the notation $\displaystyle{ k \brack ij}$ can be rewritten as $\displaystyle{ \gamma \brack \al\be}$, where $\{e_{\al}\}$, $\{e_{\be}\}$ and $\{e_{\gamma}\}$ are the $Q$-orthogonal bases of the modules $\fr{m}_{\al}, \fr{m}_{\be}$, and $\fr{m}_{\gamma}$, respectively.

 Recall  that   if $\al, \be\in R$ such that $\al+\be\neq 0$, then $[\fr{k}^{\al}_{\bb{C}}, \fr{k}^{\be}_{\bb{C}}]=\fr{k}^{\al+\be}_{\bb{C}}$ and  $B(\fr{k}^{\al}_{\bb{C}}, \fr{k}^{\be}_{\bb{C}})=0$ (cf. \cite[p.~ 168]{Hel}). Since $\displaystyle {\gamma \brack \al\be}\neq 0$ if and only if $Q([\fr{m}_{\al}, \fr{m}_{\be}], \fr{m}_{\gamma})\neq 0$, we can easily conclude that $\displaystyle {\gamma \brack \al\be}\neq 0$, if and only if the positive
 roots $\al, \be, \gamma$ are such that $\al+\be-\gamma=0$. 
 Thus, we have   $\displaystyle {\al+\be \brack \al\be}\neq 0$, for any $\al, \be\in R$ such that $\al+\be\in R$. 
 
 By using the above notation,   it can be shown (cf. \cite{Sak})
 that the Ricci component  $r_{\al}$ corresponding to the isotropy summand $\fr{m}_{\al}$ is given by 
  \begin{equation}\label{ricc}
   r_{\al}=\frac{1}{2x_{\al}}+\frac{1}{8}\sum_{\be, \gamma\in R^{+}}\frac{x_{\al}}{x_{\be}x_{\gamma}} {\al \brack \be\gamma} -\frac{1}{4}\sum_{\be, \gamma\in R^{+}}\frac{x_{\gamma}}{x_{\al}x_{\be}}{ \gamma \brack \al\be}
 \end{equation}
for any $\al\in R^{+}$. 
Hence,   a $K$-invariant metric (\ref{Inva}) on $K/T$ is an Einstein metric with Einstein constant $k$ if and only if it is a positive real solution of the system
\begin{equation}\label{EinsteinM}
\Big\{ r_\al=k : \al\in R^{+}\Big\}.
\end{equation}

 \begin{prop}\label{abc}
 For a full flag manifold $K/T$ the triples $\displaystyle {\al+\be \brack \al \ \be}$  are given by 
  \begin{equation}\label{Sakane}
  {\al+\be \brack \al \ \be}=2N^{2}_{\al, \be}.
 \end{equation}
 \end{prop}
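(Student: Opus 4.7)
My plan is to compute the triple $\displaystyle{\al+\be \brack \al\ \be}$ directly from its definition (\ref{ijk}) in the $Q$-orthonormal basis $\{X_\al,Y_\al\}$ of $\fr{m}_\al$ recorded in (\ref{fullbass}). Unwinding the definition, the triple equals a sum of at most eight squared inner products $Q([U,V],W)^2$, taken over the choices $U\in\{X_\al,Y_\al\}$, $V\in\{X_\be,Y_\be\}$, $W\in\{X_{\al+\be},Y_{\al+\be}\}$, so the whole assertion reduces to evaluating these eight scalars.

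The first simplification comes from the orthogonality of root spaces with respect to $B$: one has $B(\fr{k}^\mu_\bb{C},\fr{k}^\nu_\bb{C})=0$ unless $\mu+\nu=0$, so only the $E_{\al+\be}$- and $E_{-(\al+\be)}$-components of each bracket $[U,V]$ can pair nontrivially with $X_{\al+\be}$ or $Y_{\al+\be}$. Expanding the four brackets $[X_\al,X_\be]$, $[X_\al,Y_\be]$, $[Y_\al,X_\be]$, $[Y_\al,Y_\be]$ via (\ref{str}) and discarding all contributions not of the form $E_{\pm(\al+\be)}$, I would use the symmetry $N_{-\al,-\be}=N_{\al,\be}$ to combine the surviving terms into a single real multiple of $X_{\al+\be}$ or $Y_{\al+\be}$. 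A short computation yields the four projections onto $\fr{m}_{\al+\be}$ as $\tfrac{N_{\al,\be}}{\sqrt{2}}X_{\al+\be}$, $\tfrac{N_{\al,\be}}{\sqrt{2}}Y_{\al+\be}$, $\tfrac{N_{\al,\be}}{\sqrt{2}}Y_{\al+\be}$, and $-\tfrac{N_{\al,\be}}{\sqrt{2}}X_{\al+\be}$ respectively, so exactly four of the eight inner products are nonzero and each equals $\pm\tfrac{N_{\al,\be}}{\sqrt{2}}$.

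Summing the squares of the four nonzero contributions then gives $4\cdot\tfrac{N_{\al,\be}^2}{2}=2N_{\al,\be}^2$, which is the claimed identity. The only real obstacle is careful bookkeeping with signs and factors of $i$ when re-expressing $E_{\pm(\al+\be)}$ in terms of $X_{\al+\be}$ and $Y_{\al+\be}$; modulo that the argument is a purely mechanical expansion, and the symmetry $N_{-\al,-\be}=N_{\al,\be}$ is precisely what ensures that the four nonzero contributions all have the same absolute value.
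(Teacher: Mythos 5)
Your proposal is correct and follows essentially the same route as the paper: expand the four brackets $[X_\al,X_\be]$, $[X_\al,Y_\be]$, $[Y_\al,X_\be]$, $[Y_\al,Y_\be]$ via (\ref{str}), use $N_{-\al,-\be}=N_{\al,\be}$ and the orthogonality of root spaces to isolate the $E_{\pm(\al+\be)}$ components, and sum the squared projections onto $\fr{m}_{\al+\be}$. If anything, your term-by-term evaluation of all eight inner products is a more literal application of definition (\ref{ijk}) than the paper's condensed computation of the single quantity $B([X_\al+Y_\al,X_\be+Y_\be],X_{\al+\be}+Y_{\al+\be})$, and both yield $2N_{\al,\be}^2$.
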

 \begin{proof}
 By definition   (\ref{ijk}) it is   $\displaystyle {\al+\be \brack \al \ \be}=(A_{\al \be}^{\al+\be})^{2}$, where 
 \begin{eqnarray*}  
 A_{\al\be}^{\al+\be} &=& B([X_{\al}+Y_{\al}, X_{\be}+Y_{\be}], X_{\al+\be}+Y_{\al+\be}) \nonumber \nonumber \\ 
  &=& \frac{1}{2\sqrt{2}}B([A_{\al}, A_{\be}]+[A_{\al}, B_{\be}]+[B_{\al}, A_{\be}]+[B_{\al}, B_{\be}],A_{\al+\be}+B_{\al+\be}). \label{eq}
 \end{eqnarray*}
 By using (\ref{str})  and the relations $N_{\al,\be}=N_{-\al, -\be}$, and $N_{\al, -\be}=N_{-\al, \be}$, we obtain that
\[
 \begin{tabular}{ll}
 $[A_{\al}, A_{\be}] \ =$ &  $N_{\al, \be}(E_{\al+\be}+E_{-(\al+\be)})+N_{\al, -\be}(E_{\al-\be}+E_{-\al+\be}),$  \\   
 $[A_{\al}, B_{\be}] \ =$ &  $iN_{\al, \be}(E_{\al+\be}-E_{-(\al+\be)})-iN_{\al, -\be}(E_{\al-\be}-E_{-\al+\be}),$  \\ 
 $[B_{\al}, A_{\be}] \ =$ &  $iN_{\al, \be}(E_{\al+\be}-E_{-(\al+\be)})+iN_{\al, -\be}(E_{\al-\be}-E_{-\al+\be}),$  \\ 
 $[B_{\al}, B_{\be}] \ =$ &  $-N_{\al, \be}(E_{\al+\be}+E_{-(\al+\be)})+N_{\al, -\be}(E_{\al-\be}+E_{-\al+\be}),$
 \end{tabular}
 \]
  or equivalently,
 \begin{equation}\label{IMP}
 [A_{\al}+B_{\al},A_{\be}+B_{\be}] = 2iN_{\al, \be}(E_{\al+\be}-E_{-(\al+\be)})+2N_{\al, -\be}(E_{\al-\be}+E_{-\al+\be}). 
 \end{equation}
 But $B(E_{\al}, E_{\be})=0$ if $\al+\be \neq 0$,  and thus $B(E_{\pm(\al-\be)}, E_{\pm(\al+\be)})=0$.  So (\ref{eq}) implies that
  \begin{eqnarray*}
  A_{\al\be}^{\al+\be} &=&  \frac{1}{2\sqrt{2}}B\Big(2iN_{\al, \be}(E_{\al+\be}-E_{-(\al+\be)})+2N_{\al, -\be}(E_{\al-\be}+E_{-\al+\be}), (E_{\al+\be}+E_{-(\al+\be)}) \\ && +i(E_{\al+\be}-E_{-(\al+\be)})\Big) \\
  &=&  \frac{1}{2\sqrt{2}} B\Big(2iN_{\al, \be}(E_{\al+\be}-E_{-(\al+\be)}), (E_{\al+\be}+E_{-(\al+\be)})+i(E_{\al+\be}-E_{-(\al+\be)})\Big) \\
  &=&  \frac{1}{\sqrt{2}}N_{\al, \be}\Bigg(B\Big(i(E_{\al+\be}-E_{-(\al+\be)}), E_{\al+\be}+E_{-(\al+\be)}\Big) \\
  && +B\Big(i(E_{\al+\be}-E_{-(\al+\be)}), i(E_{\al+\be}-E_{-(\al+\be)})\Big)\Bigg) \\
  &=& \frac{1}{\sqrt{2}}N_{\al, \be}\Big(B(B_{\al+\be}, A_{\al+\be})+B(B_{\al+\be}, B_{\al+\be})\Big)  
  = -\frac{2}{\sqrt{2}}N_{\al, \be},  
    \end{eqnarray*}
since $B(B_{\al+\be}, A_{\al+\be})=0$ and $B(B_{\al+\be}, B_{\al+\be})=-2$.  The last equation  implies that $(A_{\al\be}^{\al+\be})^{2}=2N^{2}_{\al, \be}$. 
\end{proof}

\begin{remark}\textnormal{
If $\al, \be\in R$ such that $\al-\be\in R$, then by a similar method we obtain that $\displaystyle {\al-\be \brack \al \ \be}=2N^{2}_{\al, -\be}$}.
\end{remark}  

\begin{remark}\label{W}\textnormal{
 Two roots $\al, \be\in R$ have the same length with respect to the Killing form $B$ 
 if and only if  there is an element $w$ of the Weyl group $W(R)$ of the root system $R$ such that $\be=w(\al)$ (see for example \cite[p. 242]{Tau}).  Thus,  due to the invariance of the Killing form  under $W(R)$,  it is obvious that  for any element $w\in W(R)$ it is}
 \begin{equation}\label{Weyl}
 \genfrac{[}{]}{0pt}{}{w(\gamma)}{w(\al) \ w(\be)}=\genfrac{[}{]}{0pt}{}{\gamma}{\al \ \be}.
 \end{equation}
 \end{remark}
\noindent

  \markboth{Andreas Arvanitoyeorgos, Ioannis Chrysikos and Yusuke Sakane}{Homogeneous Einstein metrics  on  $G_2/T$ }
\section{The full flag manifold $G_2/T$}\label{section2}
\markboth{Andreas Arvanitoyeorgos, Ioannis Chrysikos and Yusuke Sakane}{Homogeneous Einstein metrics  on  $G_2/T$ }  
 We now   study the geometry of the full flag manifold $G_2/T$, where $T$ is a maximal torus of $G_2$. We start by describing  
 its isotropy representation 
 
 \subsection{The decomposition of the isotropy representation of $G_2/T$}\label{section2.1}
The root system of the exceptional  complex simple Lie algebra $\fr{g}_2$  
can be chosen as by $R=\{
\pm\al_{1},\pm \al_{2}, \pm(\al_{1}+\al_{2}), \pm(\al_{1}+2\al_{2}), \pm(\al_{1}+3\al_{2}), \pm(2\al_{1}+3\al_{2})
\}$.  We fix a system of simple roots to be  $\Pi=\{\al_{1}, \al_{2}\}$.  With respect to $\Pi$ the positive roots are given by  
   \begin{equation}\label{positiveG2}
   R^{+}=\{\al_{1}, \al_{2}, \al_{1}+\al_{2}, \al_{1}+2\al_{2}, \al_{1}+3\al_{2}, 2\al_{1}+3\al_{2}\}.
   \end{equation}
    The  maximal root  is $\widetilde{\al}=2\al_1+3\al_2$ (see Figure 2).    The angle between $\al_1$ and $\al_2$ is $5\pi/6$ and  we have $\left\|\al_1\right\|=\sqrt{3}\left\|\al_2\right\|$.  Note that the roots of $G_2$ form succesive angles of $\pi/6$.
    Also, the Weyl group of $G_2$ is generated by rotations of $\mathbb{R}^2$ about the origin through an angle $\pi/6$, and
    reflexions about the vertical axis.
    
    \medskip
     \setlength{\unitlength}{1mm}
          {\footnotesize{
 \begin{picture}(500,30)(70, 10)
 \put(150, 0){\circle{1}}
\put(150, 0){\vector(1,0){20}}
\put(172,2){\makebox(0,0){$\al_2$}}
\put(150,0){\vector(0,1){34}}
\put(150.5, 36){\makebox(0,0){$2\al_1+3\al_2$}}
\put(150,0){\vector(-1,0){20}}
\put(150,0){\vector(-1,2){9}}
\put(140,21){\makebox(0,0){$\al_1+\al_2$}}
 \put(150,0){\vector(-2,1){36}}
 \put(115,21){\makebox(0,0){$\al_1$}}
\put(150,0){\vector(0,-1){34}} 
\put(150,0){\vector(1, 2){9}} 
\put(160,21){\makebox(0,0){$\al_1+2\al_2$}}
\put(150,0){\vector(2,1){36}}
\put(185,21){\makebox(0,0){$\al_1+3\al_2$}}
\put(150,0){\vector(2,-1){36}}
\put(150,0){\vector(1,-2){9}}
\put(150,0){\vector(-2,-1){36}}
\put(150,0){\vector(-1,-2){9}}
  \end{picture}}}

\vspace{4.5cm}
\begin{center}
 {{\bf Figure 2}. { \small The root system of $G_2$}}
    \end{center}
 
 The full flag manifold $G_{2}/T$ is obtained by painting black both the two simple roots in the Dynkin diagram of $G_2$.  According to (\ref{fullflag}) and  since    $|R^{+}|=6$, the isotropy representation $\fr{m}$ of $G_2/T$ decomposes into six    inequivalent irreducible $\ad(\fr{k})$- submodules, i.e. 
   \[
   \fr{m}=\fr{m}_1\oplus\fr{m}_2\oplus\fr{m}_3\oplus\fr{m}_4\oplus\fr{m}_5\oplus\fr{m}_6.
   \]
    where  the submodules $\fr{m}_{i}$ $(i=1\leq i \leq 6)$, are given as follows:
 \begin{equation}\label{submodules}
 \left.
 \begin{tabular}{l}
$\fr{m}_1=\fr{m}_{\al_1}=  \mathbb{R}A_{\al_{1}}+\mathbb{R}B_{\al_1}$, \\
$\fr{m}_2=\fr{m}_{\al_2}= \mathbb{R}A_{\al_{2}}+\mathbb{R}B_{\al_2}$, \\ 
$\fr{m}_3=\fr{m}_{\al_1+\al_2}=\mathbb{R}A_{\al_1+\al_2}+\mathbb{R}B_{\al_1+\al_2}$, \\
$\fr{m}_4= \fr{m}_{\al_1+2\al_2} =\mathbb{R}A_{\al_1+2\al_2}+\mathbb{R}B_{\al_1+2\al_2}$,  \\
$\fr{m}_5= \fr{m}_{\al_1+3\al_2}= \mathbb{R}A_{\al_1+3\al_2}+\mathbb{R}B_{\al_1+3\al_2}$,  \\
$\fr{m}_6= \fr{m}_{2\al_1+3\al_2} =\mathbb{R}A_{2\al_1+3\al_2}+\mathbb{R}B_{2\al_1+3\al_2}$.
\end{tabular}\right\}
\end{equation}
\noindent

 \subsection{K\"ahler-Einstein metrics}\label{section2.2} 
 Let $K/T$ be a full flag manifold of a compact connected simple Lie group $K$. A $K$-invariant complex structure on $K/T$ is determined completely by an $\Ad(T)$-invariant endomorphism $J_{o}$ of $\fr{m}_{\mathbb{C}}$ such that $J_{o}^{2}=-\Id_{\fr{m}^{\mathbb{C}}}$ and 
 \begin{equation}\label{Niz}
 [J_{o}X, J_{o}Y]_{\fr{m}^{\mathbb{C}}}-[X, Y]_{\fr{m}^{\mathbb{C}}}-J_{o}[X, J_{o}Y]_{\fr{m}^{\mathbb{C}}}-J_{o}[J_{o}X, Y]_{\fr{m}^{\mathbb{C}}}=0,
 \end{equation}
  for all $X, Y\in\fr{m}_{\mathbb{C}}$. The eigenvalues of $J_{o}$ on $\fr{m}_{\bb{C}}$ are $\pm i$ and thus we obtain the decomposition
  $\fr{m}_{\bb{C}}=\fr{m}_{1,0}\oplus\fr{m}_{0,1}$, where $\fr{m}_{1,0}$ and $\fr{m}_{0,1}$ are the eigenspaces of $J_{o}$   with eigenvalues $+i$ and $-i$, respectively.  One can easily check that the integrability condition (\ref{Niz}) for $J$ is equivalent to the condition that  $\fr{t}_{\bb{C}}\oplus\fr{m}_{1,0}$ is a complex subalgebra of $\fr{k}_{\bb{C}}$. 
  
  Now, from the relation $J_{o}(\Ad(t)X)=\Ad(t)J_{o}(X)$ for any $t\in T$ and $X\in\fr{m}_{\bb{C}}$, one can show that  (cf. \cite{B}, \cite{AP}, \cite{Bor})  
 \begin{equation}\label{complex}
 J_{o}E_{\pm\al}=\pm iE_{\pm\al} \qquad  (\al\in R^{+}).
 \end{equation}
 Thus there is an one-to-one correspondence between invariant complex structures $J$ on $K/T$ and invariant orderings $R^{+}$ in $R$.  Since invariant orderings are in one-to-one correspondence with Weyl chambers, the above bijection is expressed as follows:
 \begin{prop}\label{tcomplex} 
 There is an one-to-one correspondence between invariant complex structures on $K/T$ and  Weyl chambers in $i\fr{t}^{*}$.
 \end{prop}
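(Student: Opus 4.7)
\medskip
\noindent \textbf{Proof plan.} The plan is to leverage what has already been set up in the paragraph preceding the proposition. Any invariant complex structure $J_o$ commutes with $\Ad(T)$, so its $\pm i$-eigenspaces $\fr{m}_{1,0}$ and $\fr{m}_{0,1}$ are $\Ad(T)$-invariant subspaces of $\fr{m}_{\bb{C}} = \sum_{\alpha \in R} \fr{k}_{\bb{C}}^{\alpha}$. Since the root spaces are the distinct one-dimensional irreducible $\Ad(T)$-summands, each eigenspace must be a direct sum of root spaces. I would therefore write $\fr{m}_{1,0} = \sum_{\alpha \in R_J} \fr{k}_{\bb{C}}^{\alpha}$ for some subset $R_J \subset R$, and use the fact that the conjugation $\tau(E_\alpha) = E_{-\alpha}$ intertwines $\fr{m}_{1,0}$ with $\fr{m}_{0,1}$ (because $J_o$ is a real endomorphism) to conclude that $R = R_J \sqcup (-R_J)$.

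The central step is to show that $R_J$ is closed under root addition, which will identify it with a system of positive roots. This follows directly from the integrability condition: as noted in the paragraph preceding the statement, (\ref{Niz}) is equivalent to $\fr{t}_{\bb{C}} \oplus \fr{m}_{1,0}$ being a complex subalgebra of $\fr{k}_{\bb{C}}$, hence for $\alpha, \beta \in R_J$ with $\alpha + \beta \in R$ the nonzero bracket $[E_\alpha, E_\beta] = N_{\alpha, \beta} E_{\alpha + \beta}$ must lie in $\fr{m}_{1,0}$, forcing $\alpha + \beta \in R_J$. Conversely, given any choice of positive roots $R^+$, the formula (\ref{complex}) defines a real, $\Ad(T)$-invariant endomorphism $J_o$ of $\fr{m}_{\bb{C}}$ with $J_o^2 = -\Id$, and the closure of $R^+$ under addition guarantees that $\fr{t}_{\bb{C}} \oplus \sum_{\alpha \in R^+} \fr{k}_{\bb{C}}^{\alpha}$ is a subalgebra, so (\ref{Niz}) is satisfied. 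This yields a bijection between invariant complex structures on $K/T$ and systems of positive roots in $R$.

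To conclude, I would invoke the standard dictionary between positive systems and Weyl chambers of $i\fr{t}^{*}$: each $R^+$ determines the open chamber of elements $\lambda \in i\fr{t}^{*}$ on which every root of $R^+$ is strictly positive, and conversely each connected component of the complement of the root hyperplanes specifies the set of roots taking strictly positive values on it. Composing the two bijections produces the desired one-to-one correspondence between invariant complex structures on $K/T$ and Weyl chambers in $i\fr{t}^{*}$. I do not anticipate any serious obstacle here; the only genuinely substantive point is the equivalence between integrability of $J_o$ and closure of $R_J$ under root addition, and this is already essentially contained in the discussion preceding the proposition.
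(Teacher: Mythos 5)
Your argument is correct and follows essentially the same route as the paper, which establishes the correspondence by noting that $\Ad(T)$-equivariance forces $J_o E_{\pm\al}=\pm i E_{\pm\al}$ for some ordering $R^+$ (citing \cite{B}, \cite{AP}, \cite{Bor}) and then identifying invariant orderings with Weyl chambers. Your write-up merely supplies the details the paper delegates to the references — namely that reality of $J_o$ gives $R=R_J\sqcup(-R_J)$ and integrability gives closure of $R_J$ under addition, so $R_J$ is a positive system.
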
 
 
 In Figure 2, we see that in the root lattice $\Delta_{G_{2}}=\Span_{\bb{Z}}\{\al : \al\in R\}$ of $G_2$, these are twelve Weyl chambers 
 (six positive and six negative), which in turn  induce six different positive orderings $R^{+}$ in $R$,   and six negative ones given by $R^{-}=-R^{+}$. 
 Thus, relation (\ref{complex}) implies that on $G_2/T$,  admits twelve $G_2$-invariant complex structures $J$, or six pairs of conjugate complex structures.  Notice that if $J$ is a complex structure corresponding to an ordering $R^{+}$ in $R$, then the conjugate complex  structure  $\overline{J}$ of $J$, is  determined by the opposite ordering  $R^{-}=-R^{+}$.  Thus we  identify $J$ and $\overline{J}$.   In this way, we conclude that $G_2/T$ admits  six  invariant K\"ahler--Einstein metrics, which are all isometric to each other, since  the six complex structures on $G_2/T$ are    equivalent  under an automorphism of $G_2$.  Any such  metric  is obtained from the other by  permuting the parameters which define it.

We now  compute the (unique) K\"ahler--Einstein metric which is compatible with the natural complex structure
 $J_{\rm nat}$, that is, the complex structure corresponding to the natural invariant ordering $R^{+}$ given by (\ref{positiveG2}).  
 From (\ref{Inva}), a $G_2$-invariant Riemannian metric on $G_2/T$ is given by 
   \begin{equation}\label{metric}
g=  x_1\cdot Q|_{\fr{m}_1}+\cdots+x_6\cdot Q|_{\fr{m}_6}
\end{equation} 
where  we have set $x_1=x_{\al_1}, x_2=x_{\al_{2}}, x_3=x_{\al_1+\al_2}$, $x_4=x_{\al_1+2\al_2}$, $x_5=x_{\al_1+3\al_2}$ and $x_6=x_{2\al_1+3\al_2}$.  Note that  $x_i\in\mathbb{R}^{+}$ for all $i=1, \ldots, 6$.  Next we will denote such metrics by $g=(x_1, x_2, x_3, x_4, x_5, x_6)\in\bb{R_+}^{6}$.

\begin{theorem}\label{G2KE}
The full flag manifold $G_2/T$ admits six  invariant K\"ahler--Einstein metrics which are isometric to each other.  The  K\"ahler-Einstein metric $g_{\rm nat} = g_{2 \delta}^{}$ which is compatible with the natural invariant ordering $J_{\rm nat}$ is given (up to a scale) by $g_{\rm nat}=(3, 1, 4, 5, 6, 9)$.  
\end{theorem}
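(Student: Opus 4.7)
The plan is to split the statement into its two assertions: the count and mutual isometry of the six Kähler--Einstein metrics, and the explicit computation of $g_{\rm nat}$.

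For the counting and isometry part, I would argue as follows. By Proposition \ref{tcomplex}, invariant complex structures on $G_2/T$ correspond bijectively to Weyl chambers in $i\mathfrak{t}^{*}$. Since the Weyl group $W(G_2)$ is the dihedral group of order $12$, there are $12$ Weyl chambers, hence $12$ invariant complex structures. Identifying each $J$ with its conjugate $\overline{J}$ (which corresponds to the opposite ordering $R^{-}=-R^{+}$, and which defines the same Hermitian geometry) collapses these to six invariant complex structures, or equivalently six pairs of conjugate ones. By Theorem \ref{ke}, each such complex structure admits a unique compatible $K$-invariant Kähler--Einstein metric, namely the one corresponding to $2\delta$ for the associated positive system. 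Because $W(G_2)$ acts simply transitively on the set of positive systems and this action is realized by inner automorphisms of $G_2$, these six metrics are pairwise related by $G_2$-automorphisms, hence mutually isometric; they differ only by permutation of the parameters $x_1,\dots,x_6$.

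For the explicit computation of $g_{\rm nat}$ associated with the natural ordering (\ref{positiveG2}), I would apply formula (\ref{kke}):
\[
g_{2\delta} = \sum_{\alpha\in R^{+}} 2(\Lambda_1+\Lambda_2,\alpha)\cdot Q|_{\mathfrak{m}_\alpha}.
\]
The task reduces to computing the six numbers $2(\delta,\alpha)$ for $\alpha\in R^{+}$, where $\delta=\Lambda_1+\Lambda_2$. Normalize $(\alpha_2,\alpha_2)=1$, so that $(\alpha_1,\alpha_1)=3$ by the relation in Section 5, and use the angle $5\pi/6$ between $\alpha_1$ and $\alpha_2$ to obtain $(\alpha_1,\alpha_2)=-3/2$. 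Solving the defining equations (\ref{fund}) for the fundamental weights then gives $(\Lambda_1,\alpha_1)=3/2$, $(\Lambda_1,\alpha_2)=0$, $(\Lambda_2,\alpha_1)=0$, $(\Lambda_2,\alpha_2)=1/2$, hence $(\delta,\alpha_1)=3/2$ and $(\delta,\alpha_2)=1/2$. A direct linear expansion over $R^{+}$ in (\ref{positiveG2}) then yields
\[
\bigl(2(\delta,\alpha_1),\,2(\delta,\alpha_2),\,2(\delta,\alpha_1+\alpha_2),\,2(\delta,\alpha_1+2\alpha_2),\,2(\delta,\alpha_1+3\alpha_2),\,2(\delta,2\alpha_1+3\alpha_2)\bigr)=(3,1,4,5,6,9),
\]
which is the claimed vector.

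There is no really hard step here; the proof is essentially an assembly of results already established. The subtlest point is justifying that the six Kähler--Einstein metrics are genuinely isometric (and not merely isomorphic as Hermitian data), which uses that the transitive $W(G_2)$-action on positive systems is realized by inner automorphisms of $G_2$. The rest is a short arithmetic computation in the root lattice of $\mathfrak{g}_2$, where the only care needed is fixing the normalization of $Q=-B$ consistently with the ratio $(\alpha_1,\alpha_1)=3(\alpha_2,\alpha_2)$.
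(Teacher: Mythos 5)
Your proposal is correct and follows essentially the same route as the paper: the six complex structures come from the twelve Weyl chambers modulo conjugation, their Kähler--Einstein metrics are identified via Theorem \ref{ke} and are isometric because the Weyl group action is realized by automorphisms of $G_2$, and the vector $(3,1,4,5,6,9)$ is obtained from formula (\ref{kke}) together with the defining relations (\ref{fund}) under the normalization $(\alpha_1,\alpha_1)=3(\alpha_2,\alpha_2)=3$. The only cosmetic difference is your computation of $(\alpha_1,\alpha_2)=-3/2$, which is not needed since (\ref{fund}) already forces $(\Lambda_1,\alpha_2)=(\Lambda_2,\alpha_1)=0$.
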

\begin{proof}
According to notation of Section \ref{section1.1}, the weight $\delta$ for $G_2/T$ is given by 
\[
\delta=\delta_{G_2}=\displaystyle\frac{1}{2}\sum_{\al\in R^{+}}\al=\sum_{i=1}^{2}\Lambda_{i}=\Lambda_{1}+\Lambda_{2},
\]
 where $\Lambda_{1}$ and $\Lambda_2$ are the fundamnetal weights corresponding to the the simples roots $\al_1$ and $\al_2$, respectively.  Now, in Figure 2  one can easily distinguish the long roots 
 \begin{equation}\label{lr}
 \mathcal{L}_{1}=\al_1, \ \ \mathcal{L}_{2}=\al_1+3\al_2, \ \  \mathcal{L}_{3}=2\al_1+3\al_2,
 \end{equation}
  from the short roots 
  \begin{equation}\label{sr}
  \mathcal{S}_{1}=\al_2, \ \  \mathcal{S}_{2}=\al_1+\al_2, \ \  \mathcal{S}_{3}=\al_1+2\al_2.
  \end{equation} 
   It is  $\left\|\mathcal{L}_{i}\right\|=\sqrt{3}\left\|\mathcal{S}_{j}\right\|$ where $1\leq i, j\leq 3$ and $i, j$ independent.   
 We set $(\mathcal{L}_{i}, \mathcal{L}_{i})=3$ and $(\mathcal{S}_{i}, \mathcal{S}_{i})=1$, for any $1\leq i\leq 3$.
 According to expression (\ref{metric}),  the K\"ahler-Einstein metric $g_{\rm nat}$ 
which is compatible   to the natural invariant complex structure $J_{\rm nat}$ defined by the  ordering $R^+$, is given by
{\small {\[
g_{\rm nat}=g_{{\al}_1}\cdot Q|_{\fr{m}_1}+g_{{\al}_2}\cdot Q|_{\fr{m}_2}+g_{({\al}_1+{\al}_2)}\cdot Q|_{\fr{m}_3}
+g_{({\al}_1+2{\al}_2)}\cdot Q|_{\fr{m}_4}+g_{({\al}_1+3e{\al}_2)}\cdot Q|_{\fr{m}_5}+g_{(2{\al}_1+3{\al}_2)}\cdot Q|_{\fr{m}_6},
\]}}
where $\fr{m}_k$ $(k=1, \ldots, 6)$ are given by (\ref{submodules}).  By using (\ref{fund}) and applying  relation (\ref{kke}), we obtain the following values for the components $g_{\al}=(2\delta, \al)$, where $\al\in R^{+}$:
\begin{eqnarray*}
g_{{\al}_1} &=& 2 (\Lambda_1+\Lambda_2, \al_1)= 2 (\Lambda_1, \al_1)=  (\al_1, \al_1)= 3, \\
g_{{\al}_2} &=& 2 (\Lambda_1+\Lambda_2, \al_2)= 2 (\Lambda_2, \al_2)=  (\al_2, \al_2) = 1, \\
g_{{\al}_1+{\al}_2} &=& 2 (\Lambda_1+\Lambda_2, \al_1+\al_2)= 2 (\Lambda_1, \al_1)+2 (\Lambda_2, \al_2)= 4,  \\
g_{{\al}_1+2{\al}_2} &=& 2 (\Lambda_1+\Lambda_2, \al_1+2\al_2)= 2 (\Lambda_1, \al_1)+4 (\Lambda_2, \al_2)= 5, \\ 
g_{{\al}_1+3{\al}_2} &=& 2 (\Lambda_1+\Lambda_2, \al_1+3\al_2)= 2 (\Lambda_1, \al_1)+6 (\Lambda_2, \al_2)= 6, \\
g_{2{\al}_1+3{\al}_2} &=& 2 (\Lambda_1+\Lambda_2, 2\al_1+3\al_2)= 4 (\Lambda_1, \al_1)+6  (\Lambda_2, \al_2)= 9.
\end{eqnarray*}
Thus  the $G_2$-invariant K\"ahler--Einstein metric which corresponds to $J_{\rm nat}$ is given (up to a constant) by $g_{\rm nat}=(3, 1, 4, 5, 6, 9)$.  \end{proof}

 \subsection{Homogeneous Einstein metrics}\label{section2.3}
  We now proceed to the calculation of the Ricci tensor $\Ric_{g}$ corresponding to the $G_2$-invariant metric (\ref{metric}) on $G_2/T$.  
 Following the notation of Section \ref{section1.3}, the tensor $\Ric_{g}$, as a $G_2$-invariant symmetric covariant 2-tensor on $G_2/T$, is given by  
\[
\Ric_{g}=r_{1}x_1\cdot Q|_{\fr{m}_1}+\cdots +r_{6}x_6\cdot Q|_{\fr{m}_6},
\]
where for simplicity we have set   $r_1=r_{\al_1}, r_2=r_{\al_{2}}, r_3=r_{\al_1+\al_2}$, $r_4=r_{\al_1+2\al_2}$, $r_5=r_{\al_1+3\al_2}$ and $r_6=r_{2\al_1+3\al_2}$.  In order to apply    (\ref{ricc}),   we  first need to find  the non zero structure constants  $\displaystyle\genfrac{[}{]}{0pt}{}{k}{ij}$ of $G_2/T$.    For such a procedure we need to determine all  triples of roots $(\al, \be, \gamma)$ with zero sum,  that is $\al+\be+\gamma=0$.  According to Section \ref{section1.3}, by using  relations (\ref{positiveG2}) and (\ref{submodules}),  we obtain the following  results:
\begin{equation}\label{struG2}
{\small{\left.\begin{tabular}{lll}
$\al_1+\al_2+(-(\al_1+\al_2))=0$ & $\Rightarrow$ & $\displaystyle{ \al_1+\al_2 \brack \al_1 \ \al_2}={ 3 \brack 1 2}\neq 0$ \\\\
$\al_2+(\al_1+\al_2)+(-(\al_1+2\al_2))=0$ & $\Rightarrow$ & $\displaystyle{ \al_1+2\al_2 \brack \al_2 \  \al_1+\al_2}={ 4 \brack 2 3}\neq 0$ \\\\
$\al_2+(\al_1+2\al_2)+(-(\al_1+3\al_2))=0$ & $\Rightarrow$ & $\displaystyle{ \al_1+3\al_2 \brack \al_2 \  \al_1+2\al_2}={ 5 \brack 2 4}\neq 0$\\\\
$\al_1+(\al_1+3\al_2)+(-(2\al_1+3\al_2))=0$ & $\Rightarrow$ & $\displaystyle{ 2\al_1+3\al_2 \brack \al_1 \  \al_1+3\al_2}={ 6 \brack 1 5}\neq 0$\\\\
$(\al_1+\al_2)+(\al_1+2\al_2)+(-(2\al_1+3\al_2))=0$ & $\Rightarrow$ & $\displaystyle{ 2\al_1+3\al_2 \brack \al_1+\al_2 \  \al_1+2\al_2}={ 6 \brack 34}\neq 0$
\end{tabular}\right\}}}
\end{equation}

 For the calculation of the above triples we will apply Proposition \ref{abc} and relation (\ref{Weyl}).  Recall  that
\begin{equation}\label{N}
N_{\al, \be}^{2}=N_{\al,\be}N_{-\al, -\be}=\frac{q(p+1)}{2} Q(\al, \al),
\end{equation}
where   $p, q$ are the largest nonnegative integers such that $\be+k\al\in R$, with $-p\leq k\leq q$.

Since, according to  (\ref{lr}) and (\ref{sr}), 
the  positive roots (\ref{positiveG2}) of $G_2$ are divided into long and short ones respectively, we  rewrite (\ref{struG2}) as follows:
 {\small{ \[
  \begin{tabular}{l}
$c_{12}^{3}=\displaystyle\genfrac{[}{]}{0pt}{}{3}{12}= \genfrac{[}{]}{0pt}{}{\al_1+\al_2}{\al_1 \ \al_2}= \genfrac{[}{]}{0pt}{}{\al_1}{\al_2 \ \al_1+\al_2}=\genfrac{[}{]}{0pt}{}{\mathcal{L}_{1}}{\mathcal{S}_{1}  \ \mathcal{S}_{2}}$, \\\\ 
$c_{24}^{5}=\displaystyle\genfrac{[}{]}{0pt}{}{5}{24}= \genfrac{[}{]}{0pt}{}{\al_1+3\al_2}{\al_2 \ \al_1+2\al_2}=\genfrac{[}{]}{0pt}{}{\mathcal{L}_{2}}{\mathcal{S}_{1}  \ \mathcal{S}_{3}}$, \\\\ 
$c_{34}^{6}=\displaystyle\genfrac{[}{]}{0pt}{}{6}{34}= \genfrac{[}{]}{0pt}{}{2\al_1+3\al_2}{\al_1+\al_2 \ \al_1+2\al_2}=\genfrac{[}{]}{0pt}{}{\mathcal{L}_{3}}{\mathcal{S}_{2}  \ \mathcal{S}_{3}}$, \\\\ 
 $c_{15}^{6}=\displaystyle\genfrac{[}{]}{0pt}{}{6}{15}= \genfrac{[}{]}{0pt}{}{2\al_1+3\al_2}{\al_1 \ \al_1+3\al_2}=\genfrac{[}{]}{0pt}{}{\mathcal{L}_{3}}{\mathcal{L}_{1}  \ \mathcal{L}_{2}}$, \\\\ 
$c_{23}^{4}=\displaystyle\genfrac{[}{]}{0pt}{}{4}{23}= \genfrac{[}{]}{0pt}{}{\al_1+2\al_2}{\al_2 \ \al_1+\al_2}=\genfrac{[}{]}{0pt}{}{\mathcal{S}_{3}}{\mathcal{S}_{1}  \ \mathcal{S}_{2}}$.
\end{tabular}
\]}}

\medskip
\noindent Due to Remark \ref{W} and relation (\ref{Weyl}), it is obvious that $\displaystyle\genfrac{[}{]}{0pt}{}{6}{15}\neq  \displaystyle\genfrac{[}{]}{0pt}{}{4}{23}$.  We also obtain the following:
\begin{lemma}\label{three}
The triples $c_{12}^{3}=\displaystyle\genfrac{[}{]}{0pt}{}{3}{12}$, $c_{24}^{5}=\displaystyle\genfrac{[}{]}{0pt}{}{5}{24}$ and $c_{34}^{6}=\displaystyle\genfrac{[}{]}{0pt}{}{6}{34}$ are equal.
\end{lemma}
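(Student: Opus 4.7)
The plan is to deduce Lemma \ref{three} from the Weyl-group invariance (\ref{Weyl}) of Remark \ref{W}, together with the identification $\fr{m}_{-\al}=\fr{m}_\al$ of real $\Ad(T)$-submodules of $\fr{m}$; the latter is immediate from the basis (\ref{fullbass}), since replacing $\al$ by $-\al$ only changes $A_\al, B_\al$ by a sign.

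The key geometric observation is that the three triples appearing in Lemma \ref{three} are precisely the three ``short $+$ short $=$ long'' decompositions of positive roots of $G_2$, namely $\mathcal{L}_1=\mathcal{S}_1+\mathcal{S}_2$, $\mathcal{L}_2=\mathcal{S}_1+\mathcal{S}_3$, and $\mathcal{L}_3=\mathcal{S}_2+\mathcal{S}_3$ (cf.\ Figure 2). The rotation $\sigma\in W(G_2)$ through $\pi/3$ about the origin of the root plane lies in $W(G_2)$ and, from Figure 2, it cyclically permutes the three long roots and cyclically permutes the three short roots. Explicitly,
\[
\sigma(\al_1)=-(\al_1+3\al_2),\qquad \sigma(\al_2)=\al_1+2\al_2,\qquad \sigma(\al_1+\al_2)=-\al_2,
\]
so applying (\ref{Weyl}) we get
\[
c_{12}^3=\genfrac{[}{]}{0pt}{}{\al_1+\al_2}{\al_1\ \al_2}=\genfrac{[}{]}{0pt}{}{\sigma(\al_1+\al_2)}{\sigma(\al_1)\ \sigma(\al_2)}=\genfrac{[}{]}{0pt}{}{-\al_2}{-(\al_1+3\al_2)\ \al_1+2\al_2}.
\]
Replacing each negative root by its positive counterpart via $\fr{m}_{-\al}=\fr{m}_\al$ and using the full symmetry of $\genfrac{[}{]}{0pt}{}{k}{ij}$ in its three entries, the right-hand side becomes $\genfrac{[}{]}{0pt}{}{5}{2\ 4}=c_{24}^5$. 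Applying $\sigma^2$ to the original triple in exactly the same fashion transports $(\fr{m}_{\al_1},\fr{m}_{\al_2},\fr{m}_{\al_1+\al_2})$ to $(\fr{m}_{2\al_1+3\al_2},\fr{m}_{\al_1+\al_2},\fr{m}_{\al_1+2\al_2})$, yielding $c_{12}^3=c_{34}^6$ and finishing the proof.

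No real obstacle is expected: the only non-routine ingredient is the transitivity of the rotation subgroup of $W(G_2)$ on the six short roots (resp.\ the six long roots), which is read off directly from Figure 2. A purely computational alternative would be to proceed through Proposition \ref{abc}: each of the three quantities equals $2N_{\al,\be}^2$ for a pair of short roots $\al,\be$ whose sum is long, and in every case one checks from (\ref{N}) that the $\al$-string through $\be$ has parameters $p=2$, $q=1$, producing the common value $2N_{\al,\be}^2 = 2\cdot\tfrac{1\cdot 3}{2}\cdot(\al,\al)=3$.
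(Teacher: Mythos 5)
Your main argument is correct and is in essence the same as the paper's: both rest on the Weyl-invariance (\ref{Weyl}) of Remark \ref{W} together with the identification $\fr{m}_{-\al}=\fr{m}_{\al}$. The paper realizes the needed permutations of the long roots $\{\mathcal{L}_i\}$ and short roots $\{\mathcal{S}_j\}$ by the two simple reflections $s_{\al_2}$ and $s_{\al_1}$, whereas you use the order-six rotation $\sigma$ through $\pi/3$, which does lie in $W(G_2)$ (it is the Coxeter element $s_1s_2$ up to orientation); your images $\sigma(\al_1)=-(\al_1+3\al_2)$, $\sigma(\al_2)=\al_1+2\al_2$, $\sigma(\al_1+\al_2)=-\al_2$ are consistent and the deduction goes through.

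One slip is worth correcting. The motivating claim that the three triples are the ``short $+$ short $=$ long'' decompositions, with $\mathcal{L}_1=\mathcal{S}_1+\mathcal{S}_2$, is false: in fact $\mathcal{S}_1+\mathcal{S}_2=\al_1+2\al_2=\mathcal{S}_3$, and the correct additive relation in the first triple is $\mathcal{S}_2=\mathcal{L}_1+\mathcal{S}_1$, i.e.\ $\mathcal{L}_1=\mathcal{S}_2-\mathcal{S}_1$. So $c_{12}^{3}$ involves one long and two short roots only ``up to sign''. This does not affect your Weyl-group argument, which only sees the modules $\fr{m}_{\pm\al}$, but it does affect the ``computational alternative'': applying Proposition \ref{abc} directly to $c_{12}^{3}$ gives $2N_{\al_1,\al_2}^{2}$ with $\al_1$ \emph{long} and root-string parameters $p=0$, $q=1$, so $N^2=\tfrac12(\al_1,\al_1)=\tfrac32(\al_2,\al_2)$ --- the common value is reproduced, but not by the uniform $p=2$, $q=1$ count you describe; to get that uniform count for the first triple one must first pass to the short pair $\{\al_1+\al_2,\,-\al_2\}$ via the Remark following Proposition \ref{abc}.
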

\begin{proof} 
 The Weyl group $W(R)$ is generated by the simple reflections $\{s_{1}=s_{\al_{1}}, s_{2}=s_{\al_{2}}\}$ defined by
  \begin{equation}\label{refl}
  s_{i}(\al_{j})=\al_{j}-A_{ij}\al_{i}, \qquad (1\leq i, j\leq 2),
  \end{equation}
  where $A_{ij}=\displaystyle\frac{2(\al_{i}, \al_{j})}{(\al_{i}, \al_{i})}$  are the entries of the Cartan matrix of $G_{2}$. The Cartan matrix of $G_2$ (with respect to the fixed basis $\Pi$) is given by 
  $\left( \begin{tabular}{cc}
 2  & -1 \\
-3  &  2 
\end {tabular}
\right)$. We easily get that 
\begin{equation}\label{s}
  s_1(\al_{1})=-\al_{1}, \quad s_{1}(\al_{2})=\al_1+\al_2, \quad s_2(\al_2)=-\al_{2}, \quad s_{2}(\al_{1})=\al_1+3\al_2.
  \end{equation}
  Due to the fact that 
  $\left\|\mathcal{L}_1\right\|=\left\|\mathcal{L}_{2}\right\|=\left\|\mathcal{L}_{3}\right\|$ and 
  $\left\|\mathcal{S}_1\right\|=\left\|\mathcal{S}_{2}\right\|=\left\|\mathcal{S}_{3}\right\|$,
  in order to prove the relation $\displaystyle\genfrac{[}{]}{0pt}{}{3}{12}=\displaystyle\genfrac{[}{]}{0pt}{}{5}{24}$ it suffices to find an element
  $w\in W(R)$ such that $w(\mathcal{L}_1)=\mathcal{L}_{2}$ and $w(\mathcal{S}_{2})=\mathcal{S}_{3}$.  By combining   (\ref{refl}) and (\ref{s}) we obtain that
   $s_{2}(\al_{1})=\al_1+3\al_2$, 
   $s_{2}(\al_1+\al_2)=\al_1+2\al_2$,
  thus $w=s_{2}=s_{\al_{2}}$.  Similarly, for the simple reflection $w'=s_{1}=s_{\al_{1}}$ we compute
  \[
  s_{1}(\mathcal{L}_{2})=s_{1}(\al_1+3\al_2)=2\al_1+3\al_2=\mathcal{L}_{3}, \qquad s_{1}(\mathcal{S}_{1})=s_{1}(\al_{2})=\al_1+\al_2=\mathcal{S}_{2}, 
  \]
  which implies the equality $\displaystyle\genfrac{[}{]}{0pt}{}{5}{24}=\displaystyle\genfrac{[}{]}{0pt}{}{6}{34}$.
  \end{proof}

  We  now proceed   to the calculation of   $\displaystyle\genfrac{[}{]}{0pt}{}{3}{12}$.  From Proposition \ref{abc} we have that 
  \[
  \displaystyle\genfrac{[}{]}{0pt}{}{3}{12}=\displaystyle\genfrac{[}{]}{0pt}{}{\al_1+\al_2}{\al_1 \ \al_2}=2N_{\al_1, \al_2}^{2}.
  \]
   By using the relation $(\al_1, \al_1)=3(\al_2, \al_2)$ and equation (\ref{N}) we get that
   \[
    N_{\al_1, \al_2}^{2}=\displaystyle\frac{3}{2} Q(\al_2, \al_2), \ \ \text{so} \ \ \genfrac{[}{]}{0pt}{}{3}{12}=3 Q(\al_2, \al_2).
   \]
    The normalizing value $(\al_2, \al_2)$ is given by $Q(\al_2, \al_2) = \displaystyle\frac{1}{12}$ (cf. \cite{Brb}) (recall that for the definition of the triples $\displaystyle\genfrac{[}{]}{0pt}{}{\al+\be}{\al \be}$ we used an orthonormal basis of the submodules $\fr{m}_{\al}, \fr{m}_{\be}, \fr{m}_{\al+\be}$). Thus $\displaystyle\genfrac{[}{]}{0pt}{}{3}{12}=\displaystyle\frac{1}{4}$. 
     Similarly, it is
  \[  \genfrac{[}{]}{0pt}{}{4}{23}=\genfrac{[}{]}{0pt}{}{\al_1+2\al_2}{\al_2 \ \al_1+\al_2}=2N_{\al_2, \al_1+\al_2}^{2}=4 Q(\al_2, \al_2)=\frac{1}{3}, 
  \]
   and
  \[
   \genfrac{[}{]}{0pt}{}{6}{15}=  \genfrac{[}{]}{0pt}{}{2\al_1+3\al_2}{\al_1 \ \al_1+3\al_2}=2N_{\al_1, \al_1+3\al_2}^{2}=3 Q(\al_2, \al_2)=\frac{1}{4}.
   \]
   Now, from Lemma \ref{three} we conclude the following:
        \begin{prop}\label{t}
   The non zero triples $\displaystyle\genfrac{[}{]}{0pt}{}{k}{ij}$ of the full flag manifold $G_2/T$ are given by
   \[
  \genfrac{[}{]}{0pt}{}{3}{12}=\genfrac{[}{]}{0pt}{}{5}{24}= \genfrac{[}{]}{0pt}{}{6}{34}=\genfrac{[}{]}{0pt}{}{6}{15}=\frac{1}{4}, \ \ 
  \text{and} \ \ \genfrac{[}{]}{0pt}{}{4}{23}=\frac{1}{3}.
  \]
   \end{prop}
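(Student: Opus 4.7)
The strategy is to enumerate all non-zero triples combinatorially, reduce the number of independent values by Weyl-group symmetry, and evaluate each remaining value via Proposition~\ref{abc} together with the Kostant-type formula~(\ref{N}).

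First I would use the observation from Section~\ref{section1.3} that $\genfrac{[}{]}{0pt}{}{k}{ij}\neq 0$ precisely when the three indexing roots satisfy a zero-sum relation. Scanning $R^{+}$ from~(\ref{positiveG2}) produces exactly the five relations already listed in~(\ref{struG2}), so the only non-zero candidates are $\genfrac{[}{]}{0pt}{}{3}{12}$, $\genfrac{[}{]}{0pt}{}{4}{23}$, $\genfrac{[}{]}{0pt}{}{5}{24}$, $\genfrac{[}{]}{0pt}{}{6}{15}$ and $\genfrac{[}{]}{0pt}{}{6}{34}$.

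Second, I would classify these five triples by the length-profile of the three roots appearing in each, using the labels $\mathcal{L}_i,\mathcal{S}_j$ of~(\ref{lr})--(\ref{sr}). Three of them, namely $\genfrac{[}{]}{0pt}{}{3}{12}$, $\genfrac{[}{]}{0pt}{}{5}{24}$ and $\genfrac{[}{]}{0pt}{}{6}{34}$, share the same (long, short, short) profile. By Remark~\ref{W}, any element of $W(R)$ carrying the roots of one such triple to those of another forces equality of the corresponding structure constants, and the simple reflections $s_{\al_1},s_{\al_2}$ from~(\ref{s}) do exactly this (this is precisely Lemma~\ref{three}). After this reduction only three independent values remain: one representative of the (long, short, short) class, the purely long triple $\genfrac{[}{]}{0pt}{}{6}{15}$, and the purely short triple $\genfrac{[}{]}{0pt}{}{4}{23}$.

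Third, each remaining value is computed via Proposition~\ref{abc}, which rewrites $\genfrac{[}{]}{0pt}{}{k}{ij}$ as $2N_{\al,\be}^{2}$ for the appropriate pair, followed by the formula $N_{\al,\be}^{2}=\tfrac{1}{2}q(p+1)Q(\al,\al)$ in~(\ref{N}), where $(p,q)$ records the $\al$-string through $\be$ in the $G_2$ root diagram of Figure~2. Combined with $(\al_1,\al_1)=3(\al_2,\al_2)$ and the normalization $Q(\al_2,\al_2)=1/12$ from~\cite{Brb}, these three independent values evaluate to $1/4$, $1/4$ and $1/3$ respectively, which together with the Weyl-group identifications of the previous step give the proposition. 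The only real obstacle is bookkeeping: one must determine $(p,q)$ correctly for each representative, since $\al$-strings in a non-simply-laced system like $G_2$ can extend up to length four and mixing long and short roots is the easiest way to lose a factor of $3$.
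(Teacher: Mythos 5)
Your proposal is correct and follows essentially the same route as the paper: enumerate the non-zero triples via the zero-sum condition as in (\ref{struG2}), reduce to three representatives using the length profiles and the Weyl-group argument of Lemma \ref{three}, and evaluate each via Proposition \ref{abc} together with (\ref{N}) and the normalization $Q(\al_2,\al_2)=1/12$. The root-string bookkeeping you flag indeed gives $(p,q)=(0,1)$ for the two values equal to $1/4$ and $(p,q)=(1,2)$ for $\genfrac{[}{]}{0pt}{}{4}{23}=1/3$, matching the paper's computation.
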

 \noindent  Therefore, we obtain the following expression for the Ricci tensor:
   
   \begin{prop}\label{G}
  The components $r_{i}$ $(i=1, \ldots, 6)$ of the Ricci tensor associated to the  
$G$-invariant Riemannian metric  $g$ given in {\em (\ref{metric})}
 are the following:
 \begin{eqnarray*}
 r_1&=& \frac{1}{2x_1}+  \frac{1}{16}\Big(\frac{x_1}{x_2x_3}- \frac{x_2}{x_1x_3}- \frac{x_3}{x_1 x_2}\Big)+  \frac{1}{16}\Big( \frac{x_1}{x_5x_6}- \frac{x_5}{x_1x_6}- \frac{x_6}{x_1x_5}\Big)  \\
 r_2&=& \frac{1}{2x_2}+  \frac{1}{16}\Big(\frac{x_2}{x_1x_3}- \frac{x_1}{x_2x_3}- \frac{x_3}{x_1 x_2}\Big) +   \frac{1}{12}\Big( \frac{x_2}{x_3x_4}- \frac{x_3}{x_2x_4}- \frac{x_4}{x_2x_3}\Big) + \frac{1}{16}\Big( \frac{x_2}{x_4x_5}- \frac{x_4}{x_2x_5}- \frac{x_5}{x_2x_4}\Big) \\
 r_3&=& \frac{1}{2x_3}+  \frac{1}{16}\Big(\frac{x_3}{x_1x_2}- \frac{x_2}{x_1x_3}- \frac{x_1}{x_2 x_3}\Big) +   \frac{1}{12}\Big( \frac{x_3}{x_2x_4}- \frac{x_2}{x_3x_4}- \frac{x_4}{x_2x_3}\Big) + \frac{1}{16}\Big( \frac{x_3}{x_4x_6}- \frac{x_4}{x_3x_6}- \frac{x_6}{x_3x_4}\Big) \\
 r_4&=& \frac{1}{2x_4}+ \frac{1}{12}\Big(\frac{x_4}{x_2x_3}- \frac{x_2}{x_3x_4}- \frac{x_3}{x_2 x_4}\Big) +   \frac{1}{16}\Big( \frac{x_4}{x_2x_5}- \frac{x_2}{x_4x_5}- \frac{x_5}{x_2x_4}\Big) + \frac{1}{16}\Big( \frac{x_4}{x_3x_6}- \frac{x_3}{x_4x_6}- \frac{x_6}{x_3x_4}\Big) \\
 r_5 &=& \frac{1}{2x_5}+  \frac{1}{16}\Big(\frac{x_5}{x_1x_6}- \frac{x_1}{x_5x_6}- \frac{x_6}{x_1x_5}\Big)+  \frac{1}{16}\Big( \frac{x_5}{x_2x_4}- \frac{x_2}{x_4x_5}- \frac{x_4}{x_2x_5}\Big)  \\
 r_6 &=& \frac{1}{2x_6}+  \frac{1}{16}\Big(\frac{x_6}{x_1x_5}- \frac{x_1}{x_5x_6}- \frac{x_5}{x_1x_6}\Big)+  \frac{1}{16}\Big( \frac{x_6}{x_3x_4}- \frac{x_3}{x_4x_6}- \frac{x_4}{x_3x_6}\Big). 
\end{eqnarray*}
       \end{prop}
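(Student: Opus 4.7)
The plan is a direct substitution: having reduced the geometric data to the two inputs (the Ricci formula (\ref{ricc}) and the table of structure constants in Proposition \ref{t}), the proof amounts to organized bookkeeping. For each of the six positive roots $\alpha\in R^{+}$, I instantiate (\ref{ricc}) with $\alpha$ on the left-hand side and run the two double sums $\sum_{\beta,\gamma\in R^{+}}$ on the right, keeping only the $(\beta,\gamma)$ pairs for which the bracket $\displaystyle{\alpha\brack \beta\gamma}$ or $\displaystyle{\gamma\brack \alpha\beta}$ is non-zero.

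The input from Proposition \ref{t} is the complete list of non-zero triples (taken as unordered sets of indices because of the symmetry $\displaystyle{k\brack ij}={k\brack ji}={j\brack ki}$): four triples of value $1/4$, namely $\{1,2,3\},\{2,4,5\},\{3,4,6\},\{1,5,6\}$, and the single triple $\{2,3,4\}$ of value $1/3$. For a fixed index $i\in\{1,\dots,6\}$ I list the triples that contain $i$: for instance $i=1$ lies in $\{1,2,3\}$ and $\{1,5,6\}$ only, $i=2$ lies in $\{1,2,3\},\{2,4,5\},\{2,3,4\}$, and so on. In the first sum $\frac{1}{8}\sum\frac{x_\alpha}{x_\beta x_\gamma}\,{\alpha\brack\beta\gamma}$ each unordered pair $\{\beta,\gamma\}$ appears twice (once as $(\beta,\gamma)$, once as $(\gamma,\beta)$), producing coefficients $\frac{1}{8}\cdot 2\cdot \frac{1}{4}=\frac{1}{16}$ and $\frac{1}{8}\cdot 2\cdot \frac{1}{3}=\frac{1}{12}$; in the second sum $\frac{1}{4}\sum\frac{x_\gamma}{x_\alpha x_\beta}\,{\gamma\brack\alpha\beta}$ the two orderings produce \emph{different} rational monomials (the roles of $\beta$ and $\gamma$ are asymmetric in the coefficient $x_\gamma/(x_\alpha x_\beta)$), and each appears with coefficient $\frac{1}{4}\cdot\frac{1}{4}=\frac{1}{16}$ or $\frac{1}{4}\cdot\frac{1}{3}=\frac{1}{12}$, with an overall minus sign.

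Assembling these contributions for $\alpha$ ranging over the six positive roots yields the six expressions listed in the proposition; for example the computation for $r_1$ uses only the triples $\{1,2,3\}$ and $\{1,5,6\}$, giving the two blocks $\frac{1}{16}\!\left(\frac{x_1}{x_2x_3}-\frac{x_2}{x_1x_3}-\frac{x_3}{x_1x_2}\right)$ and $\frac{1}{16}\!\left(\frac{x_1}{x_5x_6}-\frac{x_5}{x_1x_6}-\frac{x_6}{x_1x_5}\right)$, while for $r_2$ the triple $\{2,3,4\}$ contributes the block with denominator $12$ and the other two triples contribute blocks with denominator $16$. The task is entirely mechanical once the list of non-zero triples attached to each index is written down; no further geometric input is required. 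The only place one can go wrong is the accounting: forgetting that the bracket is symmetric in all three entries (which determines which triples $i$ belongs to) or forgetting that in the first sum the ordered pair $(\beta,\gamma)$ and $(\gamma,\beta)$ give identical monomials whereas in the second sum they do not. Working one index at a time and recording the relevant triples first eliminates this obstacle.
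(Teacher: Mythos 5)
Your proposal is correct and is exactly the paper's argument: the paper's proof of Proposition \ref{G} consists of the single sentence that the formulas follow from (\ref{ricc}) and Proposition \ref{t}, and you have simply carried out that substitution explicitly. Your bookkeeping of the ordered pairs in the two double sums (the factor $2$ collapsing in the first sum to give $1/16$ and $1/12$, versus the two distinct monomials in the second sum) is accurate and reproduces all six expressions.
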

  \begin{proof}
  This is a simple consequence of expression (\ref{ricc}) and Proposition \ref{t}.
  \end{proof}
 \medskip  
 According to (\ref{EinsteinM}), a $G_2$-invariant Riemannian metric on the full flag manifold $G_2/T$ is Einstein, if and only if,  there is a positive 
 constant $k$  such that 
 \begin{equation}\label{systemI}
  r_1=k,\quad  r_2=k, \quad r_3=k,\quad r_4=k,\quad r_5=k,\quad r_6=k, 
  \end{equation}
  where $r_{i} \ (i=1, \ldots, 6)$ are given in Proposition \ref{G}.
 
  \section{Proof of Theorem A}\label{section3}
  
  Note that the action of the Weyl group of $G_2$ on the root system of $G_2$ (cf. Figure 2) induces an action on the components of the $G_2$-invariant metric (\ref{metric}). In particular, if 
  $$ ( x_1,  \  x_2 , \ x_3, \ x_4, \ x_5, \  x_6 ) = ( a_1,  \  a_2 , \ a_3, \ a_4, \ a_5, \  x_6 ) $$ is a solution for the system of equations (\ref{systemI}), then 
  $$ ( x_1,  \  x_2 , \ x_3, \ x_4, \ x_5, \  x_6 ) = ( a_5, \  a_2, \ a_4, \  a_3, \ a_1, \  a_6 ) $$ is also a solution of equations (\ref{systemI}). In fact, if $w$ is a reflexion about  $2\alpha _1+ 3\alpha _2$  in the root diagram of $G_2$, then
        $w(\alpha _1) =  \alpha _1+3\alpha _2 $, $w(\alpha _1+\alpha _2) =  \alpha _1+2\alpha _2 $, which induces the action of exchange of $x_1$ to $x_5$ and $x_3$ to $x_4$ respectively and keeping  $x_2$ and $x_6$  fixed. 
        Similarly we see that 
  $$ ( x_1,  x_2 , x_3,  x_4, x_5,  x_6 ) = ( a_6, a_3,  a_4,   a_2, a_1,  a_5 ), \ \  ( x_1,  x_2 , x_3,  x_4, x_5,  x_6 ) = ( a_1,   a_3,  a_2,   a_4,  a_6,  a_5 ) $$ 
    $$ ( x_1,  x_2 , x_3,  x_4, x_5,  x_6 ) = ( a_5,   a_4,  a_2,   a_3, a_6,   a_1 ), \ \ 
  ( x_1,  x_2 , x_3,  x_4, x_5,  x_6 ) =  ( a_6,   a_4, a_3,  a_2, a_5,   a_1 ) $$  
  are also solutions of the equations (\ref{systemI}). These metrics are all isometric
  to each other. 
  
        In order to solve system (\ref{systemI}) we normalize our equations by setting $x_1 = x_5 = 1$, and also $x_4 = x_3$. 
          Then  we obtain the following expression for the Ricci  components in this case: 
 \begin{eqnarray*}
 r_1 = r_5&=& \frac{1}{2} + \frac{1}{16}
   \left(\frac{1}{x_2  x_3}-\frac{x_2}{x_3}-\frac{x_3}{x_2}\right)-\frac{x_6}{16} \\
 r_2&=& \frac{1}{2 x_2}+\frac{1}{12}
   \left(\frac{x_2}{{x_3}^2}-\frac{2}{x_2}\right)+\frac{1}{8}
   \left(\frac{x_2}{x_3}-\frac{x_3}{x_2}-\frac{1}{x_2 x_3}\right) \\
 r_3 = r_4&=&\frac{1}{2 x_3} +\frac{1}{16}
   \left( \frac{x_3}{x_2}-\frac{x_2}{x_3}-\frac{1}{x_2 x_3}\right) -\frac{x_2}{12 {x_3}^2} -\frac{x_6}{16 {x_3}^2} \\
 r_6&=&\frac{1}{2 x_6} +\frac{1}{16}
   \left(x_6 -\frac{2}{x_6}\right)+ \frac{1}{16}
   \left(\frac{x_6}{{x_3}^2}-\frac{2}{x_6}\right). 
\end{eqnarray*}    
   Now the system of equations  (\ref{systemI})  is equivalent to the equations 
    \begin{equation}\label{system2}
  r_1  = r_2, \quad r_2 =  r_3 ,\quad r_3 =  r_6.   
  \end{equation}
  Moreover, we see that  the system of equations  (\ref{system2}) is equivalent to the equations 
  \begin{eqnarray*}
 &  &-9 {x_2}^2 {x_3} - 4 {x_2}^2-3{x_2}
   {x_3}^2 {x_6}+24 {x_2} {x_3}^2+3
   {x_3}^3-16 {x_3}^2+9 {x_3} = 0\\
& &  9 {x_2}^2
   {x_3}+8 {x_2}^2-24 {x_2} {x_3}+3
   {x_2} {x_6}-9 {x_3}^3+16 {x_3}^2-3
   {x_3} = 0\\
 &  &-3 {x_2}^2 {x_3} {x_6}-4
   {x_2}^2 {x_6}-3 {x_2} {x_3}^2
   {x_6}^2-12 {x_2} {x_3}^2+24 {x_2}
   {x_3} {x_6}-6 {x_2} {x_6}^2+3 {x_3}^3
   {x_6}-3 {x_3} {x_6} = 0,  
  \end{eqnarray*} 
  for solutions with $x_2 x_3 x_ 6 \neq 0$. 
  
  For the case  when $x_6 =1$,  we obtain that $x_3 = x_2$ and the equation $15{ x_2}^2 -20 x_2 + 9 = 0$  by computing a Gr\"obner basis, and we do not have positive solution. Thus there are no Einstein metrics for this case. 
  
  For the case  when $x_6 \neq1$,  we obtain the following equations  by computing a Gr\"obner basis : 
    \begin{eqnarray}
   & &  28431 {x_6}^{14}-589032 {x_6}^{13}+5435343
   {x_6}^{12}-29379024 {x_6}^{11}+100757208
   {x_6}^{10} \nonumber\\
   & & -224163176 {x_6}^9 
   +336260186
   {x_6}^8-371473808 {x_6}^7+339968604
   {x_6}^6-262478048 {x_6}^5  \nonumber\\
 & &   +152856152
   {x_6}^4-69550016 {x_6}^3
   +35706576
   {x_6}^2-17407872
   {x_6}+3888000 = 0,  \label{eq29}  
 \end{eqnarray}
   \begin{eqnarray}
  & &    581985310832028473982920358054272527039
   95763069632
   {x_2}  \nonumber
   \\
 & & -36431187984975954069625075825512020735715490145
   97
   {x_6}^{13}  \nonumber
   \\
 & & +729923573884772682153741043747903396277327
   24331072
   {x_6}^{12}  \nonumber
   \\
   & & -646567727758207935002275986628033179230065
   652663061
   {x_6}^{11}  \nonumber
   \\
 & & +332151857904284537155232386064760258410965
   0553728920
   {x_6}^{10}  \nonumber 
    \\
 & & -106305246845145376410007253618095302386497
   40680444344
   {x_6}^9  \nonumber
   \\
 & & +214173648049459114295151905746377531910258398
   27487192
   {x_6}^8  \nonumber
 \\
  & & -283890611717578121261367681274569279647128836
   15920638
   {x_6}^7  \nonumber
      \end{eqnarray}
   \begin{eqnarray}
 & & +283116178659893836079897739452148677823852955
   74349024
   {x_6}^6  \nonumber
\\
 & & -247747049992028930128982437405230731314130824
   14850260
   {x_6}^5  \nonumber
  \\
 & & +175267909611029091296228342932675252979105029
   41466624
   {x_6}^4  \nonumber
 \\ & & -818811448157709557623499817645000761457850005
   6871240
   {x_6}^3  \nonumber
   \\
 & & +356237953427669893952403016556737408987587354
   2732800
   {x_6}^2  \nonumber
   \\
 & & -229895488104401886922601983642414185636336278
   3139696
   {x_6}  \nonumber
   \\
 & & +73815795605614992874388092643016853621308418553
   0880 = 0,    \label{eq30}
    \end{eqnarray}
   \begin{eqnarray}
   & &
   2424938795133451974928834825226135529333156794568
   {x_3}  \nonumber
    \\
 & &+19029972626061774836007867169218828586354518623
   1
   {x_6}^{13}  \nonumber
    \\
 & &-377267218020916490844299704842923123010868
   8015708
   {x_6}^{12}  \nonumber
    \\
 & &+330075960010637578293059366525782191334719
   10058553
   {x_6}^{11}  \nonumber
    \\
 & &-167088331330227007688571325972415637397648
   450592985
   {x_6}^{10}  \nonumber
    \\
 & &+524508423670293884907483441538953074568075
   167613750
   {x_6}^9  \nonumber
    \\
 & &-102864311819049654582348196928443648439258892
   8255299
   {x_6}^8  \nonumber
\\
 & &+132191416807590169058228088475086172695504111
   6133678
   {x_6}^7  \nonumber
    \\
 & &-128682615197266583943369970022392076497298694
   9833794
   {x_6}^6  \nonumber
    \\
 & &+110274796824734249356198080213311353909486877
   8040040
   {x_6}^5  \nonumber
    \\
 & &-748737830066525920856184078153848962211101215
   021298
   {x_6}^4  \nonumber
    \\
 & &+334500258786115622392457312297818354307784956
   975224
   {x_6}^3  \nonumber
    \\
 & &-155759212247584755088196238509822799941866625
   955256
   {x_6}^2  \nonumber
    \\
 & &+954075532838413595542047169961244884467927948
   47168
   {x_6}  \nonumber
    \\
 & &-28083415274725086532725024624855426929207778616
   800 = 0.  \label{eq31}
    \end{eqnarray} 
 
    Now, by  solving   equation (\ref{eq29})   numerically,   we obtain exactly two real solutions  which are approximately given by $ x_6 \approx 
  0.7440$ and  $ x_6 \approx  1.7896$. Substituting these values for $x_6$ into the equations  (\ref{eq30}) and  (\ref{eq31}), we get two real solutions  approximately given by $ x_2 \approx 
  0.2173$,  $ x_3 \approx  1.0234$  and $ x_2 \approx 
  0.2762$,  $ x_3 \approx  1.0347$. Moreover, we obtain the value for $k$ by (\ref{systemI}). 
  Thus we have the following. 
  \begin{theorem}\label{Einstein}
  The full flag manifold $G_2/T$ admits  two non-K\"ahler $G_2$-invariant Einstein metrics.  These metrics are given approximately as follows:  \[
  \begin{tabular}{llllll}
  $x_1 = 1$, & $x_2 \approx 0.2762$, & $x_3 = x_4 \approx1.0347$, & $x_5 = 1$, & $x_6 \approx 1.7896$, & $k \approx 0.3560$ \\
  $x_1 = 1$, & $x_2 \approx 0.2173$, & $x_3 = x_4 \approx1.0234$, & $x_5 = 1$, & $x_6 \approx 0.7440$, & $k \approx 0.4269$.
   \end{tabular}
  \]
  \end{theorem}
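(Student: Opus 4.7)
My plan is to reduce the six-variable Einstein system $r_1=r_2=\cdots=r_6$ of Proposition~\ref{G} to a polynomial system small enough for a Gr\"obner basis computation, exploit Weyl-group symmetry to impose a natural ansatz, and then extract the real positive roots numerically. The Weyl group of $G_2$ permutes the isotropy summands $\fr{m}_i$ and therefore acts on solutions of the Einstein system by permuting the components of $g=(x_1,\ldots,x_6)$; in particular, the reflection about the highest root $2\al_1+3\al_2$ exchanges $x_1\leftrightarrow x_5$ and $x_3\leftrightarrow x_4$ while fixing $x_2$ and $x_6$. Since Weyl-translates of a solution are isometric metrics, it is natural to search for solutions lying in the fixed locus of this involution, i.e.\ solutions with $x_1=x_5$ and $x_3=x_4$.

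Accordingly, I would first impose the ansatz $x_1=x_5$, $x_3=x_4$ and use the scale invariance of the Einstein condition to normalize $x_1=x_5=1$. Substituting into the expressions of Proposition~\ref{G}, the six Ricci components collapse to four ($r_1=r_5$, $r_2$, $r_3=r_4$, $r_6$), and the Einstein system reduces to the three scalar equations $r_1=r_2$, $r_2=r_3$, $r_3=r_6$ in the three positive unknowns $x_2, x_3, x_6$. Clearing the common denominator $x_2 x_3 x_6$ (permissible since every $x_i$ is positive) converts these to the three explicit polynomial equations displayed in the excerpt.

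The central technical step, and the main obstacle, is the Gr\"obner basis computation for the ideal generated by these three polynomials. I would choose a lexicographic order in which $x_6$ is the smallest variable, so that the basis contains a univariate elimination polynomial $p(x_6)$ together with relations determining $x_2$ and $x_3$ from $x_6$. The degenerate case $x_6=1$ must be treated separately, since the leading coefficient in $x_6$ vanishes there; a short direct computation forces $x_3=x_2$ and reduces the system to $15x_2^2-20x_2+9=0$, whose negative discriminant excludes positive real solutions. For $x_6\ne 1$, the computer algebra computation produces the degree-$14$ polynomial (\ref{eq29}) in $x_6$ alone, along with the two relations (\ref{eq30}) and (\ref{eq31}), linear in $x_2$ and $x_3$ respectively, that express these variables as rational functions of $x_6$.

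Finally, I would numerically solve $p(x_6)=0$ over the positive reals. A standard root-isolation routine finds precisely two positive real roots, approximately $x_6\approx 0.7440$ and $x_6\approx 1.7896$; back-substitution into (\ref{eq30}) and (\ref{eq31}) yields the corresponding approximate values of $x_2$ and $x_3$, and evaluating $r_1$ at each solution gives the Einstein constant $k$. To conclude that both metrics are non-K\"ahler, I would note that the six Weyl translates of the K\"ahler-Einstein metric $(3,1,4,5,6,9)$ of Theorem~\ref{G2KE} all have six pairwise distinct coordinates, so none of them lies in the locus $x_1=x_5$, $x_3=x_4$; hence neither of the two solutions produced by this procedure can be K\"ahler, yielding the pair of metrics claimed in the statement.
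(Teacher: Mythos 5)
Your proposal is correct and follows essentially the same route as the paper: impose the Weyl-symmetric ansatz $x_1=x_5=1$, $x_3=x_4$, reduce to three polynomial equations in $x_2,x_3,x_6$, split off the case $x_6=1$ (where $15x_2^2-20x_2+9=0$ has no real roots), and for $x_6\neq 1$ compute a lexicographic Gr\"obner basis yielding the degree-$14$ elimination polynomial (\ref{eq29}) together with the relations (\ref{eq30}), (\ref{eq31}), whose two positive real branches give the stated metrics. Your explicit argument that the solutions are non-K\"ahler (no Weyl translate of $(3,1,4,5,6,9)$ has $x_1=x_5$) is a welcome addition that the paper leaves implicit.
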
 
  Note that, for the case  when $ x_5 = x_1 = 1$ and $x_6 \neq1$, we see that $x_3 = x_4$ by computing a Gr\"obner basis.  
  
  \medskip
  
Now we consider the case when   $(x_1 - x_5) (x_1 - x_6) (x_5 - x_6) \neq 0$. 
In this case the system of equations  (\ref{systemI})  is equivalent to the equations 
    \begin{equation}\label{system3}
  r_1  - r_2 = 0, \quad r_2 - r_3 = 0, \quad r_3 -  r_4 = 0, \quad r_4 - r_5 = 0, \quad r_5 -   r_6 = 0.   
  \end{equation}
  Moreover,  by normalizing our equations by setting $x_1 =  1$, we see that  the system of equations  (\ref{system3}) is equivalent to the equations 
   \begin{equation}\label{eqgeneral}
 \left.
 \begin{tabular}{l}
$ \quad  -3 {x_2}^2 {x_3} {x_6}-6{x_2}^2 {x_4} {x_5} {x_6}-4 {x_2}^2 {x_5}{x_6}
  -3 {x_2} {x_3} {x_4} {x_5}^2+24 {x_2} {x_3}{x_4} {x_5} {x_6}
 $  \\
$ -3 {x_2}{x_3} {x_4} {x_6}^2+3{x_2} {x_3} {x_4}+4 {x_3}^2 {x_5} {x_6}+3
   {x_3} {x_4}^2 {x_6}-24
   {x_3} {x_4} {x_5} {x_6} $ \\ 
$ +3 {x_3} {x_5}^2 {x_6}+4 {x_4}^2 {x_5} {x_6}+6 {x_4} {x_5} {x_6} = 0 $,  \\
$ \quad   3{x_2}^2 {x_3} {x_6}+6 {x_2}^2 {x_4} {x_5} {x_6}+8 {x_2}^2 {x_5} {x_6}-3 {x_2} {x_3}^2 {x_5}+3 {x_2} {x_4}^2 {x_5}-24 {x_2} {x_4} {x_5} {x_6} $  \\
$+3 {x_2} {x_5}{x_6}^2 -6 {x_3}^2 {x_4}
   {x_5} {x_6}-8 {x_3}^2
   {x_5} {x_6}-3 {x_3}
   {x_4}^2 {x_6}+24 {x_3}
   {x_4} {x_5} {x_6}-3 {x_3}
   {x_5}^2 {x_6}=0$,  \\
$ \quad 3 {x_2}^2 {x_3} {x_6}-3 {x_2}^2 {x_4} {x_5} {x_6}+6 {x_2}
   {x_3}^2 {x_5}-24 {x_2} {x_3} {x_5} {x_6}-6 {x_2} {x_4}^2 {x_5}+24 {x_2} {x_4} {x_5} {x_6}$  \\
   $ +3{x_3}^2 {x_4} {x_5}{x_6}+8 {x_3}^2 {x_5}
   {x_6}-3 {x_3} {x_4}^2 {x_6}+3 {x_3} {x_5}^2 {x_6}-8 {x_4}^2 {x_5} {x_6}-3 {x_4} {x_5} {x_6} = 0$, \\
   $\quad -4 {x_2}^2 {x_5} {x_6}-3 {x_2} {x_3}^2 {x_5}-3 {x_2} {x_3} {x_4} {x_5}^2+3 {x_2} {x_3}
   {x_4} {x_6}^2-24 {x_2} {x_3} {x_4} {x_6} $ \\
     $ +3 {x_2} {x_3} {x_4}+24 {x_2} {x_3} {x_5} {x_6}+3 {x_2}
   {x_4}^2 {x_5}-3 {x_2} {x_5} {x_6}^2-4 {x_3}^2 {x_5} {x_6}+6 {x_3} {x_4}^2 {x_6}$  \\
      $ -6 {x_3} {x_5}^2 {x_6}+4 {x_4}^2 {x_5} {x_6}= 0$,  \\
     $ \quad  - {x_2}^2 {x_4} {x_5} {x_6}+{x_2}{x_3}^2 {x_5}+8 {x_2}{x_3} {x_4} {x_5} {x_6}
 -8{x_2} {x_3} {x_4} {x_5}-2{x_2} {x_3} {x_4}{x_6}^2+2 {x_2} {x_3}{x_4} $ \\
      $ +{x_2} {x_4}^2 {x_5}-{x_2} {x_5} {x_6}^2-{x_3}^2 {x_4} {x_5} {x_6}+{x_4} {x_5} {x_6} = 0$ 
\end{tabular}\right\}
\end{equation}
    for solutions with $x_2 \, x_3\,   x_4\, x_5 \,x_ 6 \neq 0$. 
    
By computing a Gr\"obner basis for  $(x_1 - x_5) (x_1 - x_6) (x_5 - x_6) \neq 0$, 
  we obtain the following equation for $x_6$ : 
      \begin{eqnarray*}  
 ({x_6}-3) ({x_6}-2) (2 {x_6}-3,) (2 {x_6}-1) (3 {x_6}-2) (3 {x_6}-1)\times \\ 
   (19570190016315141603381713123537334000000000{x_6}^{84}  \\  +472980983168555664602145477565706833680000000{x_6}^{83} \\   -18883997141974881738092861801785333252894800000{x_6}^{82} \\  +74794100641615694951351011555563273206842944000{x_6}^{81} \\ +2178950793290897699447263319032587719040052785696{x_6}^{80} \\   -23920323432867365791709113098968618521666096472928{x_6}^{79} \\ 
  +16203553461279979489585518066332806201459880764320{x_6}^{78} \\ +1146798966623805392526876699318358505159292698566560{x_6}^{77} \\
   -8253365109064021447538266895144369502847176547816896{x_6}^{76} \\
  +15784456131787043608289459347642059765045090310138944{x_6}^{75} \\ 
  +126771120189744874933437937748815475102249501093644584{x_6}^{74} \\
-1206001709632090015722926074131478604699691495585639240{x_6}^{73} 
   \\  +5328035280913750813152995811516544281680919577481605008{x_6}^{72}\\ -12482442118497915371064225556847850469474467891986831088{x_6}^{71}\\ -15498337035862631939271985201881911221051895322143942580{x_6}^{70}\\ +338557565736435688036663787252532332747532110554214415660{x_6}^{69}\\ -2039563706481217288587171432534950085804664817997809025036{x_6}^{68}
\\
     +8633720294376050564114347614772621001678803290761747417964{x_6}^{67}\\ -30615636009083905408974520798029796519322538407366673033114{x_6}^{66}
\\
   +87661580222676988149508787898727904356830084933757595795694{x_6}^{65}\\ -99295130678732982697954163540091714286402009996178205412788{x_6}^{64}   
   \\
    -930119609210489893407678494136857339116003211787949649494086{x_6}^{63}  
              \end{eqnarray*}
     \begin{eqnarray*}  
     +8251624538636047639504707897514835165179814626622592612906383{x_6}^{62}\\ -40832553691879684939044909220462014642978264995169141443718513{x_6}^{61}\\ +153113844728828704258891375722446944761321837963396088913678084{x_6}^{60}\\ -491014436822069242360009239669232171629406738219167642226922097{x_6}^{59}\\ +1486167275729136237725219222922695061597100791853410803505204530{x_6}^{58}\\ -4507398440013026996842432323631370556426336375958748492115236727{x_6}^{57}\\ +13403136215594023614151820862044576128237228016726350101925041588{x_6}^{56}\\ -36271125718390133054894878722158117315551760241836497851870273093{x_6}^{55}\\ +83047794002702199629819182231616589211956031126634402375905909446{x_6}^{54}\\ -150193325304181943384609084747614184395393721106133360789756467387{x_6}^{53}\\ +188720043120217542280412396780001736455970138521743349912578548284{x_6}^{52}\\ -76741676617349091170093918618965231261859264468609973738315278031{x_6}^{51}\\ -349518040230090282264003239755015528354901423481817134201855637044{x_6}^{50}\\ 
  +1159869423644999127595987404946998669981199498972392560506911212007{x_6}^{49}\\ -2133935440931377513253465270342417844656186357247004386125088676980{x_6}^{48}\\ +2679153098878858240105331873966059777867005041600515836218049666291{x_6}^{47}\\ -2058237669308851388130070447624724250332446226969413641626759839185{x_6}^{46}\\ -86499479106794410056521461155508940987878993824757555118076751608{x_6}^{45}\\ +3256570539222645939094151055641278779497867738115006803090527339472{x_6}^{44}\\ -6138330577768613114245899426695789910243658269718926329419520882754{x_6}^{43}\\ +7304711956091898132457001381125874880268500470135196982359597863432{x_6}^{42}\\ -6138330577768613114245899426695789910243658269718926329419520882754{x_6}^{41}\\ +3256570539222645939094151055641278779497867738115006803090527339472{x_6}^{40}\\ -86499479106794410056521461155508940987878993824757555118076751608{x_6}^{39}\\ -2058237669308851388130070447624724250332446226969413641626759839185{x_6}^{38}\\ +2679153098878858240105331873966059777867005041600515836218049666291{x_6}^{37}\\ 
-2133935440931377513253465270342417844656186357247004386125088676980{x_6}^{36}  \nonumber \\ +1159869423644999127595987404946998669981199498972392560506911212007{x_6}^{35}\nonumber \\ -349518040230090282264003239755015528354901423481817134201855637044{x_6}^{34}\nonumber \\ -76741676617349091170093918618965231261859264468609973738315278031{x_6}^{33} \nonumber \\+188720043120217542280412396780001736455970138521743349912578548284{x_6}^{32}\nonumber \\-150193325304181943384609084747614184395393721106133360789756467387{x_6}^{31}\nonumber \\+83047794002702199629819182231616589211956031126634402375905909446{x_6}^{30}\nonumber \\-36271125718390133054894878722158117315551760241836497851870273093{x_6}^{29}\nonumber \\+13403136215594023614151820862044576128237228016726350101925041588{x_6}^{28}\nonumber \\-4507398440013026996842432323631370556426336375958748492115236727{x_6}^{27}\nonumber \\+1486167275729136237725219222922695061597100791853410803505204530{x_6}^{26}\nonumber \\-491014436822069242360009239669232171629406738219167642226922097{x_6}^{25}\nonumber 
        \end{eqnarray*}
     \begin{eqnarray}  
 +153113844728828704258891375722446944761321837963396088913678084{x_6}^{24}\nonumber 
 \\
 -40832553691879684939044909220462014642978264995169141443718513{x_6}^{23}\nonumber 
\\
 +8251624538636047639504707897514835165179814626622592612906383{x_6}^{22}\nonumber \\-930119609210489893407678494136857339116003211787949649494086{x_6}^{21}\nonumber \\-99295130678732982697954163540091714286402009996178205412788{x_6}^{20}\nonumber \\+87661580222676988149508787898727904356830084933757595795694{x_6}^{19}\nonumber \\-30615636009083905408974520798029796519322538407366673033114{x_6}^{18}\nonumber \\+8633720294376050564114347614772621001678803290761747417964{x_6}^{17}\nonumber \\-2039563706481217288587171432534950085804664817997809025036{x_6}^{16}\nonumber \\
+338557565736435688036663787252532332747532110554214415660{x_6}^{15}\nonumber \\-15498337035862631939271985201881911221051895322143942580{x_6}^{14}\nonumber \\-12482442118497915371064225556847850469474467891986831088{x_6}^{13}\nonumber \\+5328035280913750813152995811516544281680919577481605008{x_6}^{12}\nonumber \\            
 -1206001709632090015722926074131478604699691495585639240{x_6}^{11}\nonumber \\+126771120189744874933437937748815475102249501093644584{x_6}^{10}\nonumber \\+15784456131787043608289459347642059765045090310138944{x_6}^{9}\nonumber \\-8253365109064021447538266895144369502847176547816896{x_6}^{8}\nonumber \\+1146798966623805392526876699318358505159292698566560{x_6}^{7}\nonumber \\+16203553461279979489585518066332806201459880764320{x_6}^{6}\nonumber \\-23920323432867365791709113098968618521666096472928{x_6}^{5}\nonumber \\+2178950793290897699447263319032587719040052785696{x_6}^{4}\nonumber \\+74794100641615694951351011555563273206842944000{x_6}^{3}\nonumber \\-18883997141974881738092861801785333252894800000{x_6}^{2}\nonumber \\+472980983168555664602145477565706833680000000{x_6} \nonumber \\
      +19570190016315141603381713123537334000000000) = 0.  \label{eq84}
       \end{eqnarray}
    Moreover,   by examining the other elements of  the obtained Gr\"obner basis,  we see that the other variables $ x_2, x_3, x_4, x_5$ can be expressed by polynomials of $x_6$ with degree 83. 
    
    For the solutions $ ({x_6}-3) ({x_6}-2) (2 {x_6}-3) (2 {x_6}-1) (3 {x_6}-2) (3 {x_6}-1)=0$, we get 
    systems of solutions of the equation (\ref{eq84}) as follows: 
     \begin{eqnarray*} 
     & & x_6 = 3,  \quad x_5 = 2,  \quad x_4 =  \frac{5}{3}, \quad x_3 = \frac{4}{3}, \quad x_2 = \frac{1}{3}, 
     \\ & & x_6 = 2,  \quad x_5 = 3,  \quad x_4 =  \frac{5}{3}, \quad x_3 = \frac{1}{3}, \quad x_2 = \frac{4}{3}, 
     \\ & &  x_6 = \frac{3}{2},  \quad x_5 = \frac{1}{2},  \quad x_4 =  \frac{2}{3}, \quad x_3 = \frac{5}{6}, \quad x_2 = \frac{1}{6}, 
     \\ & & x_6 =  \frac{1}{2},  \quad x_5 = \frac{3}{2},  \quad x_4 =  \frac{2}{3}, \quad x_3 = \frac{1}{6}, \quad x_2 = \frac{5}{6},  
     \\ & &  x_6 = \frac{2}{3},  \quad x_5 = \frac{1}{3},  \quad x_4 =  \frac{1}{9}, \quad x_3 = \frac{5}{9}, \quad x_2 = \frac{4}{9}, 
     \\ & &  x_6 = \frac{1}{3},  \quad x_5 = \frac{2}{3},  \quad x_4 = \frac{1}{9}, \quad x_3 = \frac{4}{9}, \quad x_2 = \frac{5}{9}.  
      \end{eqnarray*} 
      Note that these are six K\"ahler-Einstein metrics in Theorem \ref{G2KE}. 
      
 Now,  by  solving  equation (\ref{eq84})  of the part  of degree 84 numerically,   we obtain 14 positive solutions  which are    approximately given by 
 \begin{eqnarray*}  & & 
  x_6 \approx 0.1101296649906623,  \quad  x_6 \approx  0.1276467609933986,  \quad   x_6 \approx 0.1654266507070432, \\
& &     x_6 \approx  0.2010643285289733,  \quad  
  x_6 \approx  0.3065328288396123,  \quad    x_6 \approx  0.5181203151843693,\\
 & &   x_6 \approx  0.5477334830916693,  \quad   x_6 \approx  1.82570544045531482,  \quad   x_6 \approx  1.93005363946047411,  
\\ 
             & & x_6 \approx  3.26229332037786929,  \quad   x_6 \approx  4.97353263662529741,  \quad    x_6 \approx  6.04497519429874693, \\ & &  x_6 \approx  7.83411966130276958,   \quad  x_6 \approx  9.08020559296887189.
  \end{eqnarray*} 
  To get the solutions of the equations (\ref{eqgeneral}) for variables $ x_2, x_3, x_4, x_5$ corresponding to the solution $x_6$, we 
 substitute these values for $x_6$ into the expressions of polynomials of $x_6$ with degree 83. Then we get  systems of solutions which are  approximately given by 
     \begin{eqnarray*}  & & 
  x_6 \approx 0.11013, \quad x_5 \approx 0.547733, \quad x_4 \approx 1.61358, \quad x_3 \approx 0.399131, \quad
   x_2 \approx -0.277481, 
  \\  & & 
   x_6 \approx 0.127647, \quad x_5 \approx -0.775539, \quad
   x_4 \approx 0.202709, \quad x_3 \approx 1.7601, \quad x_2 \approx -0.203265, 
   \\ 
              & & x_6 \approx 0.165427, \quad
   x_5 \approx -0.021892, \quad x_4 \approx 0.308989, \quad x_3 \approx 0.00455279, \quad
   x_2 \approx 0.5435, 
   \\  
            & & x_6 \approx 0.201064, \quad x_5 \approx 1.82571, \quad x_4 \approx 0.728695, \quad
   x_3 \approx 2.94591, \quad x_2 \approx -0.506599, 
   \\  & & x_6 \approx 0.306533, \quad x_5 \approx -1.52438, \quad
    x_4 \approx 0.207857, \quad x_3 \approx 1.64949, \quad x_2 \approx 5.33389, 
    \\  & & x_6 \approx 0.51812, \quad
   x_5 \approx -0.100239, \quad x_4 \approx -0.120371, \quad x_3 \approx -2.58645, \quad
   x_2 \approx -0.539579, 
   \\  & & x_6 \approx 0.547733, \quad x_5 \approx 0.11013, \quad x_4 \approx 1.61358, \quad
   x_3 \approx -0.277481, \quad x_2 \approx 0.399131, 
   \\  & & x_6 \approx 1.82571, \quad x_5 \approx 0.201064, \quad
    x_4 \approx 0.728695, \quad x_3 \approx -0.506599, \quad x_2 \approx 2.94591,
    \\  & & x_6 \approx 1.93005, \quad
    x_5 \approx -0.193467, \quad x_4 \approx -1.04142, \quad x_3 \approx -4.99198, \quad
   x_2 \approx -0.232323,
   \\  & & x_6 \approx 3.26229, \quad x_5 \approx -4.97297, \quad x_4 \approx 17.4007, \quad
   x_3 \approx 5.38113, \quad x_2 \approx 0.678092, 
   \\  & & x_6 \approx 4.97353, \quad x_5 \approx 9.08021, \quad
   x_4 \approx -2.51959, \quad x_3 \approx 14.6516, \quad x_2 \approx 3.62419,
   \\  & & x_6 \approx 6.04498, \quad
   x_5 \approx -0.132336, \quad x_4 \approx 3.28544, \quad x_3 \approx 0.0275215, \quad
   x_2 \approx 1.86783, 
   \\  & & x_6 \approx 7.83412, \quad x_5 \approx -6.07566, \quad x_4 \approx -1.5924, \quad
   x_3 \approx 13.7889, \quad x_2 \approx 1.58805, 
   \\  & & x_6 \approx 9.08021, \quad x_5 \approx 4.97353, \quad
   x_4 \approx -2.51959, \quad x_3 \approx 3.62419, \quad x_2 \approx 14.6516.  \end{eqnarray*} 
   
   Note that at least one of $x_i$ for these solutions is negative. Thus we have no invariant Einstein metrics for these cases.


\begin{thebibliography}{50}

 
  \bibitem[AAr]{AA} 
 D. V. Alekseevsky and A. Arvanitoyeorgos: 
{\it Riemannian flag manifolds with homogeneous geodesics},
 Trans. Amer. Math. Soc. 359 (8)  (2007)  3769--3789. 
 
\bibitem[APe]{AP} 
 D. V. Alekseevsky and A. M. Perelomov: 
{\it Invariant K\"ahler-Einstein metrics on compact homogeneous spaces}, 
Funct. Anal. Appl. 20 (3)  (1986)  171--182. 

  \bibitem[Arv]{Arv} 
 A. Arvanitoyeorgos:
{\it New invariant Einstein metrics on generalized flag manifolds},
Trans. Amer. Math. Soc. 337  (2)  (1993) 981--995.  
 
  
 
  \bibitem[AC1]{Chry1} 
A. Arvanitoyeorgos and I. Chrysikos:
{\it Invariant Einstein metrics on generalized flag manifolds with two isotropy summands}, 
 to appear in J. Aust. Math. Soc. 
 
  
 
  \bibitem[AC2]{Chry2} 
A. Arvanitoyeorgos and I. Chrysikos:
{\it Invariant Einstein metrics on generalized flag manifolds with four isotropy summands}, 
Ann. Glob. Anal. Geom. 37 (2) (2010) 185-219. 
 
 \bibitem[ACS]{ACS}
 A. Arvanitoyeorgos, I. Chrysikos and Y. Sakane:
 {\it Complete description of invariant Einstein metrics on the generalized flag manifold $SO(2n)/U(p)\times U(n-p)$}, 
 Ann. Glob. Anal. Geom. 38 (4) (2010) 413-438.
 
   
     
  
  
  \bibitem[Bou]{Brb}
  N. Bourbaki--\'El\'ements De Math\'ematique: 
  {\it Groupes Et Alg$\grave{e}$bres De Lie}, Chapitres 4, 5 et 6
  Masson Publishing,  Paris, 1981.
  
  \bibitem[Bum]{Bum}
  D. Bump:
  {\it Lie Groups}, Graduate Text iin Mathematics: 225, 	
  Springer Science+Business Media, Inc., USA, 2004.

   \bibitem[BHi]{B}
 A. Borel and F. Hirzebruch: 
 {\it Characteristic classes and homogeneous spaces I},
  Amer. J. Math. 80  (1958)  458--538.
  
    
   \bibitem[B\"oK]{Bom} C. B\"ohm and M. Kerr:
  {\it Low-dimensional homogeneous Einstein manifolds},
  Trans. Amer. Math. Soc. 358 (4) (2005) 1455--1468.
      
   \bibitem[BFR]{Bor} 
M. Borderman, M. Forger and H. R\"omer:
 {\it Homogeneous K\"ahler manifolds: paving the way towards new supersymmetric sigma models}, 
 Comm. Math. Phys. 102 (1986)  604--647.
   
  
 
 
   
  
  
   \bibitem[DSN]{Neg} 
E. C. F. Dos Santos and C. J. C. Negreiros: 
{\it Einstein metrics on flag manifolds}, 
Revista Della, Uni\'on Mathem\'atica Argetina, 47  (2)  (2006)  77--84.

  
 


 
 \bibitem[Hel]{Hel}
 S. Helgason:
  {\it Differential Geometry, Lie Groups and Symmetric Spaces},
   Academic Press, New York 1978.
   
   
  
   
   
      \bibitem[Kim]{Kim}  
 M. Kimura: 
 {\it Homogeneous Einstein metrics on certain K\"ahler C-spaces},
  Adv. Stud. Pure Math.  18-I  (1990)  303--320. 
  
  
      
  
  
     
 
\bibitem[Nis]{N}
 M. Nishiyama: 
 {\it Classification of invariant complex structures on irreducible compact simply connected coset spaces},
  Osaka J. Math. 21 (1984) 39--58.
  
\bibitem[NRS]{NRS}
 Yu. G. Nikonorov, E. D. Rodionov and V. V. Slavskii:
 {\it Geometry of homogeneous Riemannian manifolds},
 J. Math. Sciences,  146  (6) (2007)  6313--6390.
  
 \bibitem[Sak1]{Sak} 
Y. Sakane: 
{\it Homogeneous Einstein metrics on flag manifolds},
 Lobachevskii J. Math.   (4) (1999)  71--87.
 
 \bibitem[Sak2]{Sakane}
 Y. Sakane:
 {\it Homogeneous Einstein metrics on principal circle bundles II},
  in Differential Geometry, Proceedings of the symposioum in honour 
  of Professor Su Buchin on his 90th birthday (Editors: C. H. Gu, H. S. Hu, Y. L. Xin), World Scientific Publishing, (1993)   177--186.
 
    \bibitem[PaS]{SP}
  J-S. Park and Y. Sakane:
  {\it Invariant Einstein metrics on certain homogeneous spaces},
  Tokyo J. Math.  20   (1) (1997) 51--61.
  
  
  \bibitem[Tak]{Tak}
  M. Takeuchi: 
  {\it Homogeneous K\"ahler submanifolds in complex projective spaces},
  Japan. J. Math.  4  (1)  (1978), 171--219.
  
   \bibitem[TYu]{Tau}
  P. Tauvel and R. W. T. Yu: 
  {\it Lie Algebras and Algebraic groups},
  Springer Monographs in Mathematics,  Springer--Verlag,  Berlin, 2003.
   
   \bibitem[WZ1]{Wa1} 
M. Wang and W. Ziller: 
{\it On normal homogeneous Einstein manifolds},
 Ann. Scient. $\acute{E}$c. Norm. Sup.   18  (4) (1985)  563--633.
 
  \bibitem[WZ2]{Wa2}
  M. Wang and W. Ziller: 
  {\it Existence and non-excistence of homogeneous Einstein metrics}, 
  Invent.~Math.~84 (1986)  177--194.
  \end{thebibliography}
\end{document}